\theoremstyle{plain}
\newtheorem{theorem}{Theorem}[section]
\newtheorem{lemma}[theorem]{Lemma}
\newtheorem{proposition}[theorem]{Proposition}
\theoremstyle{definition}
\newtheorem{remark}[theorem]{Remark}
\newtheorem{assumption}{Assumption}
\newenvironment{bew}[2]{\removelastskip\vspace{6pt}\noindent
 {\it Proof  #1.}~\rm#2}{\par\vspace{6pt}}
\newcommand{\norm}[1]{{||#1||}}
\newcommand{\abs}[1]{{\left|#1\right|}}
\newcommand{\dderiv}[4]{{\partial_{#1}^{#3}\partial_{#2}^{#4}}}
\newcommand{\wtilde}[1]{{\widetilde{#1}}}
\def\supp{\mathop{\mathrm{supp}}\nolimits}
\def\Id{\mathop{\mathrm{Id}}\nolimits}
\def\IK{\mathop{\mathrm{IK}}\nolimits}
\def\WKB{\mathop{\mathrm{WKB}}\nolimits}
\def\R{{\mathbb{R}}}
\def\Z{{\mathbb{Z}}}
\def\Sphere{{\mathbb{S}}}
\def\S{{\mathcal{S}}}
\def\F{{\mathcal{F}}}
\def\B{{\mathcal{B}}}
\def\L{{\mathcal{L}}}
\def\X{{\mathcal{X}}}
\def\Y{{\mathcal{Y}}}
\def\<{{\langle}}
\def\>{{\rangle}}
\def\ep{{\varepsilon}}
\title
{Strichartz estimates for Schr\"odinger equations with variable coefficients and potentials at most linear at spatial infinity}
\author{Haruya Mizutani${}^*$}
\date{\empty}
\begin{document}

\maketitle

\footnotetext{ 
2010 \textit{Mathematics Subject Classification}.
Primary 35Q41; Secondary 35B45, 81Q20.
}
\footnotetext{ 
\textit{Key words and phrases}. 
Strichartz estimates, Schr\"odinger equation, unbounded potential.
}
\footnotetext{${}^*$Research Institute for Mathematical Sciences, Kyoto University, Kyoto 606-8502, Japan. E-mail: \texttt{hmizutan@kurims.kyoto-u.ac.jp}. Partly supported by GCOE `Fostering top leaders in mathematics', Kyoto University. 
}

\begin{abstract}
In the present paper we consider Schr\"odinger equations with variable coefficients and potentials, where the principal part is a long-range perturbation of the flat Laplacian and potentials have at most linear growth at spatial infinity. We then prove local-in-time Strichartz estimates, outside a large compact set centered at origin, expect for the endpoint. Moreover we also prove global-in-space Strichartz estimates under the non-trapping condition on the Hamilton flow generated by the kinetic energy. \end{abstract}

\section{Introduction}
In this paper we study the so called (local-in-time) \emph{Strichartz estimates} for the solutions to $d$-dimensional time-dependent Schr\"odinger equations
\begin{equation}
\begin{aligned}
\label{Sch_equation}
i\partial_t u(t)=Hu(t),\ t \in \R;\quad u|_{t=0}=u_0 \in L^2(\R^d),
\end{aligned}
\end{equation}
where $d \ge 1$ and $H$ is a Schr\"odinger operator with variable coefficients:
$$
H=-\frac12 \sum_{j,k=1}^d \partial_{x_j} a^{jk}(x)\partial_{x_k} +V(x).
$$
 Throughout the paper we assume that $a^{jk}(x)$ and $V(x)$ are real-valued and smooth on $\R^d$, and $(a^{jk}(x))$ is a symmetric matrix satisfying
$$
C^{-1}\Id \le (a^{jk}(x)) \le C\Id,\quad x \in \R^d,
$$
with some $C>0$. We also assume 
\begin{assumption}
\label{assumption_A}
There exist constants $\mu,\nu \ge 0$ such that, for any $\alpha \in \Z^d_+$, 
\begin{align*}
\abs{\partial_x^\alpha (a^{jk}(x)-\delta_{jk})}\le C_\alpha \<x\>^{-\mu-|\alpha|},\quad 
\abs{\partial_x^\alpha V(x)} \le C_\alpha \<x\>^{2-\nu-|\alpha|},\quad x \in \R^d,
\end{align*}
with some $C_\alpha>0$. 
\end{assumption}
We may assume $\mu<1$ and $\nu<2$ without loss of generality. 
It is well known that $H$ is essentially self-adjoint on $C_0^\infty(\R^d)$ under Assumption \ref{assumption_A}, and we denote the unique self-adjoint extension on $L^2(\R^d)$  by the same symbol $H$. 
By the Stone theorem, the solution to \eqref{Sch_equation} is given by $u(t)=e^{-itH}u_0$, where $e^{-itH}$ is a unique unitary group on $L^2(\R^d)$ generated by $H$ and called the propagator. 

Let us recall the (global-in-time) Strichartz estimates for the free Schr\"odinger equation state that
\begin{align}
\label{Strichartz_estimates_1}
\norm{e^{it\Delta/2}u_0}_{L^p(\R;L^q(\R^d)} \le C \norm{u_0}_{L^2(\R^d)},
\end{align}
where $(p,q)$ satisfies the following \emph{admissible} condition
\begin{align}
\label{admissible}
2\le p,q \le \infty,\quad \frac2p+\frac{d}{q}=\frac{d}{2},\quad  (d,p,q) \neq (2,2,\infty).
\end{align}
For $d \ge 3$, $(p,q)=(2,\frac{2d}{d-2})$ is called the endpoint. It is well known that these estimates are fundamental in studying the local well-posedness of Cauchy problem of nonlinear Schr\"odinger equations (see, \emph{e.g.,} \cite{Cazenave}). 
The estimates \eqref{Strichartz_estimates_1} were first proved by Strichartz \cite{Strichartz} for a restricted pair of $(p,q)$ with $p=q=2(d+2)/d$, and have been extensively generalized for $(p,q)$ satisfying \eqref{admissible} by \cite{Ginibre_Velo,Keel_Tao}. Moreover, in the flat case ($a^{jk}\equiv\delta_{jk}$), local-in-time Strichartz estimates
\begin{align}
\label{Strichartz_estimates_2}
\norm{e^{itH}u_0}_{L^p([-T,T];L^q(\R^d))} \le C_T \norm{u_0}_{L^2(\R^d)},
\end{align}
have been extended to the case with potentials decaying at infinity \cite{Yajima1} or increasing at most quadratically at infinity \cite{Yajima2}. In particular, if $V(x)$ has at most quadratic growth at spatial infinity, \emph{i.e.}, 
$$V \in C^\infty(\R^d;\R),\quad |\partial_x^\alpha V(x)| \le C_\alpha\ \text{for}\ |\alpha| \ge 2,$$ then it was shown by Fujiwara \cite{Fujiwara} that the fundamental solution $E(t,x,y)$ of the propagator $e^{-itH}$ satisfies 
$$
|E(t,x,y)| \lesssim |t|^{-d/2},\quad x,y \in \R^d,
$$ 
for $t \neq 0$ small enough. The estimates \eqref{Strichartz_estimates_2} are immediate consequences of this estimate and the $TT^*$-argument due to Ginibre-Velo \cite{Ginibre_Velo} (see Keel-Tao \cite{Keel_Tao} for the endpoint estimate). For the case with magnetic fields or singular potentials, we refer to Yajima \cite{Yajima2,Yajima3} and references therein. 

On the other hand, local-in-time Strichartz estimates on manifolds have recently been proved by many authors under several conditions on the geometry. 
Staffilani-Tataru \cite{Staffilani_Tataru}, Robbiano-Zuily \cite{Robbiano_Zuily} and Bouclet-Tzvetkov \cite{Bouclet_Tzvetkov_1} studied the case on the Euclidean space with the asymptotically flat metric under several settings. 
In particular, Bouclet-Tzvetkov \cite{Bouclet_Tzvetkov_1} proved local-in-time Strichartz estimates without loss of derivatives under Assumption A with $\mu>0$ and $\nu>2$ and the non-trapping condition. 
 Burq-G\'erard-Tzvetkov \cite{BGT} proved Strichartz estimates with  a loss of derivative $1/p$ on any compact manifolds without boundaries. 
They also proved that the loss $1/p$ is optimal in the case of $M=\Sphere^d$. Hassell-Tao-Wunsch \cite{HTW} and the author \cite{Mizutani} considered the case of non-trapping asymptotically conic manifolds which are non-compact Riemannian manifolds with an asymptotically conic structure at infinity. 
Bouclet \cite{Bouclet} studied the case of an asymptotically hyperbolic manifold. 
Burq-Guillarmou-Hassell \cite{BGH} recently studied the case of asymptotically conic manifolds with hyperbolic trapped trajectories of sufficiently small fractal dimension. For global-in-time Strichartz estimates, we refer to \cite{EGS,D'AFVV} and the references therein in the case with electromagnetic potentials, and to \cite{Bouclet_Tzvetkov_2,Tataru,MMT} in the case of Euclidean space with an asymptotically flat metric. 

The main purpose of the paper is to handle a mixed case of above two situations. More precisely, we show that local-in-time Strichartz estimates for long-range perturbations still hold (without loss of derivatives) if we add unbounded potentials which have at most linear growth at spatial infinity (\emph{i.e.}, $\nu\ge 1$), at least excluding the endpoint $(p,q)=(2,2d/(d-2))$. To the best knowledge of the author, our result may be a first example on the case where both of variable coefficients and unbounded potentials in the spatial variable $x$ are present.

To state the result, we recall the non-trapping condition. We denote by
$$
H_0=H-V=-\frac12\sum_{j,k=1}^d\partial_{x_j}a^{jk}(x)\partial_{x_k},\quad 
k(x,\xi)=\frac12\sum_{j,k=1}^da^{jk}(x)\xi_j\xi_k,
$$
the principal part of $H$ and the kinetic energy, respectively, and also denote by $$(y_0(t,x,\xi),\eta_0(t,x,\xi))$$ the Hamilton flow generated by $k(x,\xi)$:
$$
\dot{y}_0(t)=\partial_\xi k(y_0(t),\eta_0(t)),\ \dot{\eta}_0(t)=-\partial_x k(y_0(t),\eta_0(t));\quad
(y_0(0),\eta_0(0))=(x,\xi). 
$$
Note that the Hamiltonian vector field $H_{k}$, generated by $k$, is complete on $\R^{2d}$ since $(a^{jk})$ satisfies the uniform elliptic condition, and $(y_0(t,x,\xi),\eta_0(t,x,\xi))$ hence exists for all $t \in \R$. We consider the following \emph{non-trapping condition}:
\begin{align}
\label{non-trapping}
\text{For any}\ (x,\xi) \in T^*\R^d\ \text{with}\ \xi \neq 0,\ |y_0(t,x,\xi)| \to +\infty\ as\ t \to \pm \infty. 
\end{align}

We now state our main result. 
\begin{theorem}						
\label{theorem_1}
\emph{(i)} Suppose that $H$ satisfies Assumption \ref{assumption_A} with $\mu>0$ and $\nu \ge 1$. Then, there exist  $R_0>0$ large enough and $\chi_0\in C^\infty_0(\R^d)$ with $\chi_0(x)=1$ for $|x| <R_0$ such that, for any $T>0$ and $(p,q)$ satisfying \eqref{admissible} and $p\neq2$, there exists $C_T>0$ such that
\begin{align}
\label{theorem_1_1}
\norm{(1-\chi_0)e^{-itH}u_0}_{L^p([-T,T];L^q(\R^d))} \le C_T \norm{u_0}_{L^2(\R^d)}. 
\end{align}
\emph{(ii)} Suppose that $H$ satisfies Assumption \ref{assumption_A} with $\mu=\nu=0$ and $k(x,\xi)$ satisfies the non-trapping condition \eqref{non-trapping}. Then, for any $\chi \in C^\infty_0(\R^d)$, $T>0$ and $(p,q)$ satisfying \eqref{admissible} and $p\neq2$, we have
\begin{align}
\label{theorem_1_2}
\norm{\chi e^{-itH}u_0}_{L^p([-T,T];L^q(\R^d))} \le C_T \norm{u_0}_{L^2(\R^d)}.
\end{align}
Moreover, combining with \eqref{theorem_1_1}, we obtain global-in-space estimates
$$\norm{e^{-itH}u_0}_{L^p([-T,T];L^q(\R^d))} \le C_T \norm{u_0}_{L^2(\R^d)},$$
provided that $\mu>0$ and $\nu \ge1$. 
\end{theorem}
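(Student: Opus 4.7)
The plan is to treat (i) and (ii) in parallel using semiclassical analysis. After a dyadic Littlewood--Paley decomposition $1 = \sum_{h = 2^{-j}} \psi(h^2 H) + \text{low frequencies}$, Strichartz estimates for $e^{-itH}$ reduce to semiclassical dispersive estimates at each scale $h$, which are then summed via the square function. At each scale, the exterior region $\{|x| > R_0\}$ will be handled by an Isozaki--Kitada long-time parametrix, exploiting the fact that classical trajectories which have escaped in an outgoing direction stay outgoing, whereas the interior region is handled by an iterated short-time WKB parametrix combined with the non-trapping hypothesis \eqref{non-trapping}.

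For part (i), I would first microlocally split $(1-\chi_0)\psi(h^2 H) = \Op(a^+) + \Op(a^-)$ modulo $O_{L^2}(h^\infty)$, with symbols $a^\pm \in \Sym{R_0}$ supported in outgoing/incoming cones
$$
\Gamma^\pm(R_0) = \{(x,\xi) : |x| \ge R_0,\ \pm x \cdot \xi \ge (-1+\sigma)|x||\xi|,\ |\xi| \in [\delta,\delta^{-1}]\}.
$$
In each such cone I would construct an Isozaki--Kitada parametrix $\FIOIK{S^\pm, b^\pm}$ whose phase $S^\pm$ solves the stationary eikonal equation $k(x,\partial_x S^\pm) + V(x) = k(\xi,\xi) + \text{err}$ with $S^\pm(x,\xi) - x \cdot \xi \to 0$ at infinity along the corresponding direction, and whose amplitude satisfies the associated transport equations; solvability in $\Omega^\pm_s(R_0)$ uses that for $R_0$ large the Hamilton flow of $k+V$ keeps trajectories outgoing there, and that $a^\pm \in \Sym{R_0}$ provides the needed initial microlocalization. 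A standard stationary-phase argument then gives the semiclassical dispersive estimate
$$
\bignorm{\FIOIK{S^\pm_t, b^\pm_t}\, \FIOIK{S^\pm_0, b^\pm_0}^*}_{L^1 \to L^\infty} \lesssim h^{-d}(h/|t|)^{d/2},\quad 0 < |t| \le T,
$$
after which the $TT^*$ argument of Keel--Tao (off the endpoint, whence the restriction $p \neq 2$) delivers semiclassical Strichartz bounds. Dyadic summation yields \eqref{theorem_1_1}.

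For part (ii), I would use \eqref{non-trapping} to push the semiclassical wave front set of $\chi\, e^{-itH}\psi(h^2 H)u_0$ out of $\{|x| \le R_0\}$ in a bounded time. By compactness there exists a uniform $T_0$ such that every trajectory of the flow of $k$ starting in $\supp\chi \times \{|\xi| \sim 1\}$ leaves $\{|x| \le 2R_0\}$ by time $T_0$ and enters an outgoing cone. I would construct a WKB parametrix $\FIOWKB{\phi, a}$ on a short time interval $[0,T_1]$ on which the eikonal equation admits a smooth caustic-free solution, and iterate it $\lceil T_0 / T_1 \rceil$ times; by an Egorov-type argument the microsupport is transported along the classical flow and, at time $T_0$, lands inside an outgoing cone $\Gamma^+(R_0)$. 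Short-time Strichartz for each WKB step follows from a dispersive estimate analogous to the one above, and the remaining long-time contribution is absorbed by part (i). The global-in-space estimate is then obtained by adding \eqref{theorem_1_1} and \eqref{theorem_1_2}.

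The main technical obstacle is the construction of the Isozaki--Kitada phase in the presence of a genuinely unbounded potential. One must verify that the Hamilton flow of $k+V$ (not only of $k$) remains outgoing in $\Omega^\pm_s(R_0)$ uniformly in $h$, and that the phase correction $S^\pm - x \cdot \xi$ together with the amplitude $b^\pm$ belong to the symbol classes $\Syms{R_0}$ with the sharp decay in $x$ dictated by Assumption \ref{assumption_A}. The condition $\nu \ge 1$ is essential here: it ensures that $\partial_x V$ is bounded and its higher derivatives decay, so that the Picard iteration for the Hamilton--Jacobi and transport equations closes in these symbol classes, exactly as in the bounded-potential case of \cite{Bouclet_Tzvetkov_1}; without it, trajectories would accelerate too rapidly and the ansatz would fail on any fixed time scale.
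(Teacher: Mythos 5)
Your proposal for part (i) has a genuine gap that goes to the heart of what the paper is doing. You propose to construct an Isozaki--Kitada parametrix directly for $e^{-itH}$ by solving the stationary eikonal equation $k(x,\partial_x S^\pm)+V(x)=\tfrac12|\xi|^2+\text{err}$ with $S^\pm-x\cdot\xi$ in the usual symbol classes, and you assert that $\nu\ge 1$ makes this work ``exactly as in the bounded-potential case of \cite{Bouclet_Tzvetkov_1}.'' This cannot be: under Assumption~\ref{assumption_A} with $\nu=1$, $V$ may grow like $\<x\>$, so on any outgoing cone $|\partial_x S^\pm(x,\xi)|$ would have to behave like $\sqrt{|\xi|^2-2V(x)}$ and $S^\pm-x\cdot\xi$ could not possibly stay in $S(\<x\>^{1-\mu},g)$; the characteristic (comparison with free motion) construction diverges. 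After semiclassical rescaling the effective potential is $h^2V(x)$, and $|h^2V(x)|\lesssim h^2\<x\>^{2-\nu}$ is only small and decaying on $\{|x|\lesssim h^{-1}\}$ — not globally. The actual resolution in the paper, borrowed from Yajima--Zhang \cite{Yajima_Zhang}, is to (a) replace $V$ by the truncated potential $V_h=V(x)\rho(\ep hx)$, so that $h^2V_h$ really is a long-range perturbation uniformly in $h$; (b) build the Isozaki--Kitada parametrix for the modified propagator $e^{-ithH_h}$ (Theorem~\ref{3_theorem_1}); (c) prove that $(e^{-ithH}-e^{-ithH_h})a^\pm(x,hD)=O_{L^2}(h^\infty)$ for $0\le\pm t\le h^{-1}$ whenever $a^\pm$ is supported in $\{|x|<h^{-1}\}$, via Duhamel iteration and non-stationary phase (Lemma~\ref{3_lemma_1}); and (d) cover the complementary region $\{|x|\gtrsim h^{-1}\}$ by a WKB parametrix for the \emph{genuine} $e^{-ithH}$ on the time scale $|t|\le\delta R$ with $R=h^{-1}$ (Theorem~\ref{A_theorem_1}). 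Steps (a), (c), (d) are absent from your proposal, and (d) is where the condition $\nu\ge1$ is actually consumed: it guarantees that $\partial_x V$ is bounded so that the Hamilton flow of $p_h=k+h^2V$ and the Hamilton--Jacobi phase stay under control on the long time interval $[0,\delta h^{-1}]$ in the far region.

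Your proposal for part (ii) — iterate a short-time WKB parametrix, propagate the microsupport via Egorov, land it in an outgoing cone after a uniformly bounded time and then feed it into part (i) — is a legitimate strategy in spirit, but it is not what the paper does and it is considerably harder to make rigorous (one needs uniform symbol bounds over $O(1/h)$ compositions and a careful treatment of the microlocal remainders generated at each step). The paper instead writes $\chi u$ via the Duhamel formula with the comparison operator $\widetilde H=-\Delta+\chi_1(H+\Delta)\chi_1$, uses the Staffilani--Tataru Strichartz/smoothing estimates for this compactly supported perturbation, then controls the forcing term by Doi's weighted local smoothing estimate (Lemma~\ref{compact_1}) and the Christ--Kiselev lemma (this is where $p\ne2$ enters). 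Both approaches use non-trapping, but through different machinery; the local-smoothing route is substantially simpler and is the standard one after Robbiano--Zuily. You should revise (i) to incorporate the potential truncation and the WKB cover of $\{|x|\gtrsim h^{-1}\}$, and for (ii) either flesh out the iteration carefully or switch to the local-smoothing/Christ--Kiselev argument.
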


We here display the outline of the paper and explain the idea of the proof of Theorem \ref{theorem_1}. 
By the virtue of the Littlewood-Paley theory in terms of $H_0$, the proof of \eqref{theorem_1_1} can be reduced to that of following semi-classical Strichartz estimates:
$$
\norm{(1-\chi_0)\psi(h^2H_0)e^{-itH}u_0}_{L^p([-T,T];L^q(\R^d))} \le C_T \norm{u_0}_{L^2(\R^d)},\quad 0<h\ll 1,
$$
where $\psi \in C^\infty_0(\R)$ with $\supp \psi \Subset (0,\infty)$ and $C_T>0$ is independent of $h$. Moreover, there exists a smooth function $a \in C^\infty(\R^{2d})$ supported in a neighborhood of the support of $(1-\chi_0)\psi\circ k$ such that $(1-\chi_0)\varphi(h^2H_0)$ can be replaced with semi-classical pseudodifferential operator $a(x,hD)$.
In Section \ref{Preliminaries}, we collect some known results on the semi-classical pseudo-differential calculus and prove such a reduction to semi-classical estimates.  Rescaling $t \mapsto th$, we want to show dispersive estimates for $e^{ithH}$on a time scale of order $h^{-1}$ for proving semi-classical Strichartz estimates. To prove dispersive estimates, we construct two kinds of parametrices, namely the Isozaki-Kitada and the WKB parametrices. Let $a^\pm\in S(1,dx^2/\<x\>^2+d\xi^2/\<\xi\>^2)$ be symbols supported in the following outgoing and incoming regions:
$$
\{(x,\xi);|x|>R_0,\ |\xi|^2\in J,\ \pm x\cdot\xi>-(1/2)|x||\xi|\}, 
$$
respectively, where $J\Subset (0,\infty)$ is an open interval so that $\pi_{\xi}(\supp\psi\circ k) \Subset J$ and $\pi_\xi$ is the projection onto the $\xi$-space. If $H$ is a long-range perturbation of $-(1/2)\Delta,$ then the outgoing (resp. incoming) Isozaki-Kitada parametrix of $e^{-itH}a^+(x,hD)$  for $0 \le t \le h^{-1}$ (resp. $e^{-itH}a^-(x,hD)$  for $-h^{-1} \le t \le 0$) has been constructed by Robert \cite{Robert} (see, also \cite{Bouclet_Tzvetkov_1}). However, because of the unboundedness of $V$ with respect to $x$, it is difficult to construct such parametrices of $e^{-ithH}a^\pm(x,hD)$ . To overcome this difficulty, we use a method due to Yajima-Zhang \cite{Yajima_Zhang} as follows. We approximate $e^{-ithH}$ by $e^{-ithH_h}$, where $H_h=H-V+V_h$ and $V_h$ vanishes in the region $\{x;|x|\gg h^{-1}\}$. Suppose that $a^+$ (resp. $a^-$) is supported in the intersection of the outgoing (resp. incoming) region and $\{x;|x|< h^{-1}\}$. In Section \ref{Isozaki-Kitada_parametrix}, we construct the Isozaki-Kitada parametrix of $e^{-ithH_h}a^\pm(x,hD)$ for $0 \le \pm t \le h^{-1}$ and prove the following justification of the approximation: for any $N>0$, 
$$
\sup_{0 \le \pm t \le h^{-1}}\norm{(e^{-ithH}-e^{-ithH_h})a^\pm(x,hD)f}_{L^2} \le C_Nh^N\norm{f}_{L^2},\quad 0<h\ll 1. 
$$
In Section \ref{WKB_parametrix}, we discuss the WKB parametrix construction of $e^{-ithH}a(x,hD)$ on a time scale of order $h^{-1}$, where $a$ is supported in $\{(x,\xi);|x|>h^{-1},\ |\xi|^2 \in I\}$. Such a parametrix construction is basically known for the potential perturbation case (see, \emph{e.g.}, \cite{Yajima4}) and has been proved by the author for the case on asymptotically conic manifolds \cite{Mizutani}. Combining these results studied in Sections \ref{Preliminaries}, \ref{Isozaki-Kitada_parametrix} and \ref{WKB_parametrix} with the Keel-Tao theorem \cite{Keel_Tao}, we prove semi-classical Strichartz estimates in Section \ref{Proof}. Section \ref{compact} is devoted to the proof of \eqref{theorem_1_2}. The proof heavily depends on local smoothing effects due to Doi \cite{Doi} and the Chirist-Kiselev lemma \cite{Christ_Kiselev}. The method of the proof is similar as that in Robbiano-Zuily \cite{Robbiano_Zuily}. Appendix \ref{appendix_A} is devoted to prove some technical inequalities on the Hamilton flow needed for constructing the WKB parametrix. 

Throughout the paper we use the following notations. For $A,B \ge 0$, $A \lesssim B$ means that there exists some universal constant $C>0$ such that $A \le CB$. We denote the set of multi-indices by $\Z^d_+$. For Banach spaces $X$ and $Y$, $\L(X,Y)$ denotes the Banach space of bounded operators from $X$ to $Y$, and we write $\L(X):=\L(X,X)$.


\section{Reduction to semi-classical estimates}
\label{Preliminaries}
We here show that Theorem \ref{theorem_1} (i) follows from semi-classical Strichartz estimates. We first record known results on the pseudo-differential calculus and the $L^p$-functional calculus. For any symbol $a \in C^\infty(\R^{2d})$ and $h \in (0,1]$, we denote the semi-classical pseudo-differential operator ($h$-PDO for short) by $a(x,hD_x)$:
$$
a(x,hD_x)u(x)=(2\pi h)^{-d}\int e^{i(x-y)\cdot\xi/h}a(x,\xi)u(y)dyd\xi,\quad u \in \S(\R^d), 
$$
where $\S(\R^d)$ is the Schwartz class. For a metric 
$$
g=dx^2/\<x\>^2+d\xi^2/\<\xi\>^2\ \text{on}\ T^*\R^d,
$$
we consider H\"ormander's symbol class $S(m,g)$ with a weighted function $m$, namely we write $a \in S(m,g)$ if $a \in C^\infty(\R^{2d})$ and 
$$
|\dderiv{x}{\xi}{\alpha}{\beta}a(x,\xi)| \le C_{\alpha\beta}m(x,\xi)\<x\>^{-|\alpha|}\<\xi\>^{-|\beta|}, \quad
x,\xi \in\R^d.
$$
Let $a \in S(m_1,g)$, $b \in S(m_2,g)$. For any $N = 0,1,2,...$, the symbol of the composition $a(x,hD)b(x,hD)$, denoted by $a\sharp b$, has an asymptotic expansion
\begin{align}
\label{symbolic_calculus}
a\sharp b(x,\xi)=\sum_{|\alpha| \le N}^N\frac{h^{|\alpha|}}{i^{|\alpha|}\alpha!}\partial_\xi a(x,\xi)\cdot\partial_x b(x,\xi)+h^{N+1}r_N(x,\xi)
\end{align}
with some $r_N \in S(\<x\>^{-N-1}\<\xi\>^{-N-1}m_1m_2,g)$. 
For $a \in S(1,g)$, $a(x,hD_x)$ is extended to a bounded operator on $L^2(\R^d)$. Moreover, if $a \in S(\<\xi\>^{-N},g)$ for some $N>d$, then the distribution kernel $A_h(x,y)$ of $a(x,hD)$ satisfies
$$\sup_x\int |A_h(x,y)|dy + \sup_y\int |A_h(x,y)|dx \le C$$
for some $C>0$ independent of $h$. By using this estimate, the Schur lemma and an interpolation, we have
$$\norm{a(x,hD)}_{\L(L^q(\R^d),L^r(\R^d))} \le C_{qr}h^{-d(1/q-1/r)},\quad 1\le q\le r \le \infty,\ h\in (0,1],$$
where $C_{qr}>0$ is independent of $h$. 

We next consider the $L^p$-functional calculus. The following lemma, which has been proved by \cite[Proposition 2.5]{Bouclet_Tzvetkov_1}, tells us that for any $\varphi \in C^\infty_0(\R)$ with $\supp \varphi \Subset (0,\infty)$, $\varphi(h^2H_0)$ can be approximated in terms of the $h$-PDO.

\begin{lemma}					
\label{2_lemma_1}
Let $\varphi \in C_0^\infty(\R)$, $\supp \varphi \Subset (0,\infty)$ and $N \ge 0$ a non-negative integer. Then there exist symbols $a_j \in S(1,g)$, $j=0,1,...,N$, such that\\
\emph{(i)} $a_0(x,\xi)=\varphi(k(x,\xi))$ and $a_j(x,\xi)$ are supported in the support of $ \varphi(k(x,\xi))$ for any $j$. \\
\emph{(ii)} For every $1 \le q\le r \le \infty$ there exists $C_{qr}>0$ such that
$$
\norm{a_j(x,hD_x)}_{\L(L^q(\R^d),L^r(\R^d))} \le C_{qr} h^{-d(1/q-1/r)},  
$$
uniformly with respect to $h \in (0,1]$. \\
\emph{(iii)}There exists a constant $N_0 \ge 0$ such that, for all $1 \le q\le r \le \infty$, 
$$
\norm{\varphi(h^2H_0)-a(x,hD_x)}_{\L(L^q(\R^d),L^r(\R^d))} \le C_{Nqr}h^{N-N_0-d(1/q-1/r)}
$$
uniformly with respect to $h \in (0,1]$, where $a=\sum_{j=0}^N h^j a_j$. 
\end{lemma}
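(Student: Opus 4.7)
The plan is to derive the lemma from the Helffer--Sj\"ostrand functional calculus combined with a semiclassical parametrix construction for the resolvent of $h^2H_0$. I would choose an almost-analytic extension $\widetilde{\varphi}\in C_0^\infty(\C)$ of $\varphi$ with $|\bar\partial\widetilde{\varphi}(z)|\le C_M|\mathrm{Im}\,z|^M$ for every $M\ge0$ and supported near $\supp\varphi\Subset(0,\infty)$, so that
$$
\varphi(h^2H_0)=-\frac{1}{\pi}\int_\C \bar\partial\widetilde{\varphi}(z)(z-h^2H_0)^{-1}L(dz),
$$
with $L(dz)$ Lebesgue measure on $\C$. Then I would build a parametrix $B(z,h)=b(x,hD,z,h)$ with symbol $b=\sum_{j=0}^N h^j b_j(x,\xi,z)$, where $b_0=(z-k)^{-1}$ and the $b_j$ for $j\ge1$ are determined recursively from $(z-h^2H_0)\sharp b\equiv 1\pmod{h^{N+1}}$ via the symbolic calculus \eqref{symbolic_calculus}; under Assumption \ref{assumption_A} each $b_j$ is a rational function of $(z-k)$ with coefficients in a suitable class $S(m_j,g)$. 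This yields $(z-h^2H_0)B(z,h)=\Id+h^{N+1}R_N(z,h)$, with the seminorms of the symbol of $R_N$ growing at worst polynomially in $|\mathrm{Im}\,z|^{-1}$.

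Plugging this parametrix into the Helffer--Sj\"ostrand formula and setting
$$
a_j(x,\xi):=-\frac{1}{\pi}\int_\C \bar\partial\widetilde{\varphi}(z)b_j(x,\xi,z)L(dz),
$$
the almost-analyticity of $\widetilde{\varphi}$ absorbs the $|\mathrm{Im}\,z|^{-1-j}$ singularities of $b_j$, so $a_j\in S(1,g)$ with the required derivative bounds. Since $b_j$ is holomorphic in $z$ off $z=k(x,\xi)$, Stokes' theorem forces $a_j(x,\xi)=0$ whenever $k(x,\xi)\notin\supp\varphi$, and for $j=0$ the standard residue computation recovers $a_0=\varphi(k)$; this proves (i). Claim (ii) is then immediate from the $L^q\to L^r$ kernel bound for $h$-PDOs with $\xi$-compactly supported symbols in $S(1,g)$ recalled just before the lemma.

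For (iii), the error can be written as
$$
\varphi(h^2H_0)-a(x,hD)=-\frac{h^{N+1}}{\pi}\int_\C \bar\partial\widetilde{\varphi}(z)(z-h^2H_0)^{-1}R_N(z,h)L(dz),
$$
and estimating this is the principal obstacle. The strategy is to factor through $L^2$: the resolvent bound $\|(z-h^2H_0)^{-1}\|_{\L(L^2)}\le|\mathrm{Im}\,z|^{-1}$, combined with the rapid decay of $\bar\partial\widetilde{\varphi}$ near the real axis, controls the middle factor, while the outer embeddings $L^q\to L^2$ and $L^2\to L^r$ are converted into the loss $h^{-d(1/q-1/r)}$ by composing $R_N(z,h)$ with a finite number of auxiliary $h$-PDO factors, each gaining a fixed power $h^{-N_0}$ through the same Schur-lemma argument. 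The delicate bookkeeping is to ensure that only $h^{N-N_0-d(1/q-1/r)}$ survives; this in turn forces one to take the almost-analytic extension vanishing to sufficiently high order on $\mathrm{Im}\,z=0$ so as to absorb all residual powers of $|\mathrm{Im}\,z|^{-1}$ arising from $R_N$.
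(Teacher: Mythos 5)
Your proposal is correct and coincides with the standard argument: the paper does not prove this lemma itself but cites \cite[Proposition 2.5]{Bouclet_Tzvetkov_1}, whose proof is exactly the Helffer--Sj\"ostrand formula combined with a semiclassical resolvent parametrix, the support property following from $-\frac{1}{\pi}\int\bar\partial\widetilde{\varphi}(z)(z-\lambda)^{-1-\ell}L(dz)=\frac{(-1)^\ell}{\ell!}\varphi^{(\ell)}(\lambda)$, and the unspecified loss $N_0$ in (iii) arising precisely from the auxiliary elliptic factors you describe when converting the $L^2$ resolvent bound into $\L(L^q,L^r)$ bounds.
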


\begin{remark}
\label{2_remark_1}
We note that Assumption \ref{assumption_A} implies a stronger bounds on $a_j$:
$$|\dderiv{x}{\xi}{\alpha}{\beta}a_j(x,\xi)| \le C_{\alpha\beta}\<x\>^{-j-|\alpha|}\<\xi\>^{-|\beta|},$$
though we do not use this estimate in the following argument.
\end{remark}

We next recall the Littlewood-Paley decomposition in terms of $\varphi(h^2H_0)$. Consider a $4$-adic partition of unity with respect to $[1,\infty)$:
$$
\sum_{j=0}^\infty \varphi(2^{-2j}\lambda)=1,\quad \lambda \in [1,\infty),
$$
where $\varphi \in C^\infty_0(\R)$ with $\supp \varphi \subset [1/4,4]$ and $0 \le \varphi \le 1$. 
\begin{lemma}					
\label{2_lemma_2}
Let $\chi \in C^\infty_0(\R^d)$. Then, for all $2\le q < \infty$ with $0 \le d(1/2-1/q) \le 1$, 
\begin{align*}
\norm{(1-\chi)f}_{L^q(\R^d)}\lesssim \norm{f}_{L^2(\R^d)}
+ \left( \sum_{j = 0}^\infty\norm{(1-\chi)\varphi(2^{-2j}H_0)f}^2_{L^q(\R^d)} \right)^{\frac12}.
\end{align*}
\end{lemma}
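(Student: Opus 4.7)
The plan is to combine a Littlewood--Paley square function estimate for $H_0$ with a commutator argument that moves the spatial cutoff $1-\chi$ across the spectral localizers $\varphi(2^{-2j}H_0)$. I would first fix $\varphi_0 \in C^\infty_0(\R)$ supported near the origin so that $\varphi_0(\lambda) + \sum_{j \ge 0} \varphi(2^{-2j}\lambda) \equiv 1$ on $[0,\infty)$, and invoke the Littlewood--Paley theorem for $H_0$ (established in Bouclet--Tzvetkov \cite{Bouclet_Tzvetkov_1} via the $h$-PDO approximation of Lemma \ref{2_lemma_1} and Mikhlin--H\"ormander-type multiplier estimates) in the form
\[
\norm{g}_{L^q} \lesssim \norm{\varphi_0(H_0)g}_{L^q} + \Bigl\|\Bigl(\sum_{j \ge 0}\abs{\varphi(2^{-2j}H_0)g}^2\Bigr)^{1/2}\Bigr\|_{L^q},
\]
valid for $2 \le q < \infty$. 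Applied to $g := (1-\chi)f$, the low-frequency term is handled by noting that $\varphi_0(H_0)$, being spectral projection onto a compact interval of a self-adjoint elliptic operator, maps $L^2$ into every Sobolev space $H^\sigma$; Sobolev embedding (admissible in the range $s := d(1/2 - 1/q) \le 1$) then gives $\norm{\varphi_0(H_0)(1-\chi)f}_{L^q} \lesssim \norm{f}_{L^2}$.

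For the square-function term, Minkowski's inequality in $L^{q/2}$ (valid since $q \ge 2$) yields
\[
\Bigl\|\Bigl(\sum_{j}\abs{\varphi(2^{-2j}H_0)(1-\chi)f}^2\Bigr)^{1/2}\Bigr\|_{L^q} \le \Bigl(\sum_{j}\norm{\varphi(2^{-2j}H_0)(1-\chi)f}_{L^q}^2\Bigr)^{1/2},
\]
after which I would commute via $\varphi(2^{-2j}H_0)(1-\chi) = (1-\chi)\varphi(2^{-2j}H_0) - [\varphi(2^{-2j}H_0),\chi]$. Controlling the commutator is the main technical input: setting $h = 2^{-j}$, Lemma \ref{2_lemma_1} replaces $\varphi(2^{-2j}H_0)$ by an $h$-PDO $a_h(x,hD)$ modulo $O(h^N)$ in $\L(L^2,L^q)$, and by the symbolic calculus \eqref{symbolic_calculus} the commutator $[a_h(x,hD),\chi]$ is itself an $h$-PDO with principal symbol $\tfrac{h}{i}\partial_\xi a_0 \cdot \partial_x \chi$, carrying one extra power of $h$. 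Combined with the $h$-PDO bound $\norm{\cdot}_{\L(L^2,L^q)} \lesssim h^{-s}$, this yields $\norm{[\varphi(2^{-2j}H_0),\chi]}_{\L(L^2,L^q)} \lesssim 2^{-j(1-s)}$, which is square-summable in the subcritical range $s < 1$ and contributes $\lesssim \norm{f}_{L^2}$ to the right-hand side.

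The main obstacle I anticipate is the endpoint case $s = 1$ (corresponding to $q = 2d/(d-2)$, precisely the Strichartz endpoint excluded from Theorem \ref{theorem_1} via $p \neq 2$), where the geometric series $\sum_j 2^{-2j(1-s)}$ just fails to converge. There one would have to extract additional decay, for instance by iterating the symbolic calculus to higher order while exploiting the compact $x$-support of $\partial_x\chi$, or by treating the endpoint separately via the Sobolev embedding $H^1(\R^d) \hookrightarrow L^{2d/(d-2)}(\R^d)$ combined with the equivalence of $H^1$ and the form-domain norm of $H_0^{1/2}$.
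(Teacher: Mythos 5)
Your strategy---the Littlewood--Paley square-function estimate for $H_0$ from Bouclet--Tzvetkov, Minkowski's inequality to pass from $L^q(\ell^2)$ to $\ell^2(L^q)$, and a commutator argument to move $1-\chi$ across the spectral cutoffs---is exactly the intended one: the paper gives no proof beyond a pointer to [BGT, Corollary 2.3], and your reconstruction is the standard adaptation of that argument to the present setting. The low-frequency term, the Minkowski step, and the bound $\norm{[\varphi(2^{-2j}H_0),\chi]}_{\L(L^2,L^q)}\lesssim 2^{-j(1-s)}$ with $s=d(1/2-1/q)$ are all correct.

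The genuine gap is the endpoint $s=1$, which you flag but do not close, and which \emph{is} part of the stated range $0\le d(1/2-1/q)\le1$ (it occurs for $d\ge3$, $q=2d/(d-2)$). Neither of your proposed remedies works as written: pushing the symbolic calculus to higher order only improves the $O(h^2)$ tail of the commutator, not its principal $O(h)$ term; and the Sobolev route $H^1\hookrightarrow L^{2d/(d-2)}$ would require a bound on $\norm{f}_{H^1}$, whereas $f$ is only in $L^2$ (applied to the commutator itself it yields a bound that is merely $O(1)$ in $j$, still not square-summable). The correct repair is the almost-orthogonality device the paper itself uses in the proof of Proposition \ref{2_proposition_1}: choose $\widetilde\varphi\in C_0^\infty((0,\infty))$ with $\widetilde\varphi\equiv1$ near $\supp\varphi$, write $[\varphi(2^{-2j}H_0),\chi]=[\varphi(2^{-2j}H_0),\chi]\,\widetilde\varphi(2^{-2j}H_0)+[\varphi(2^{-2j}H_0),\chi]\,(1-\widetilde\varphi(2^{-2j}H_0))$, and note that the symbols in the expansion of $[\varphi(2^{-2j}H_0),\chi]$ are supported where $\widetilde\varphi\circ k\equiv1$, so the second term gains at least one further power of $2^{-j}$ and is square-summable for all $s\le1$. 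The first term is bounded by $2^{-j(1-s)}\norm{\widetilde\varphi(2^{-2j}H_0)f}_{L^2}\le\norm{\widetilde\varphi(2^{-2j}H_0)f}_{L^2}$, and the finite overlap of the supports of $\widetilde\varphi(2^{-2j}\cdot)$ gives $\sum_j\norm{\widetilde\varphi(2^{-2j}H_0)f}_{L^2}^2\lesssim\norm{f}_{L^2}^2$, which closes the endpoint. (Every application of the lemma in the paper has $s=2/p<1$, so your argument suffices for the paper's purposes, but not for the lemma as stated.)
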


This lemma can be proved similarly to the case of the Laplace-Beltrami operator on compact manifolds without boundaries (cf. \cite[Corollary 2.3]{BGT}). By using this lemma, we have the following:

\begin{proposition}
\label{2_proposition_1}
Let $\chi_0$ be as that in Theorem \ref{theorem_1}. 
Suppose that there exist $h_0,\delta>0$ small enough such that, for any $\psi \in C^\infty((0,\infty))$ and any admissible pair $(p,q)$ with $p>2$, 
\begin{align}
\label{2_proposition_1_1}
\norm{(1-\chi_0)\psi(h^2H_0)e^{-itH}u_0}_{L^p([-\delta,\delta];L^q(\R^d))} 
\le C \norm{u_0}_{L^2(\R^d)},
\end{align}
uniformly with respect to $h \in (0,h_0]$. Then, the statement of Theorem \ref{theorem_1} (i) holds. 
\end{proposition}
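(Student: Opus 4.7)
The plan is to obtain \eqref{theorem_1_1} from the hypothesized semi-classical bound \eqref{2_proposition_1_1} through three standard reductions: a time partition, the Littlewood--Paley decomposition of Lemma \ref{2_lemma_2}, and finally a frequency-localization of the initial datum paired with a commutator estimate. First, by time translation invariance and $L^2$-unitarity of $e^{-itH}$, I would reduce to proving the estimate on $[-\delta,\delta]$: partition $[-T,T]$ into $O(T/\delta)$ subintervals of length $2\delta$, decompose $e^{-itH}=e^{-i(t-t_k)H}e^{-it_kH}$ on each, and use $\norm{e^{-it_kH}u_0}_{L^2}=\norm{u_0}_{L^2}$.

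Next, I would apply Lemma \ref{2_lemma_2} pointwise in $t$ to $f=e^{-itH}u_0$, take the $L^p_t$ norm, and invoke Minkowski's inequality (valid since $p\ge 2$) to interchange $L^p_t$ with $\ell^2_j$, yielding
\begin{align*}
\norm{(1-\chi_0)e^{-itH}u_0}_{L^p([-\delta,\delta];L^q)} \lesssim \norm{u_0}_{L^2} + \Bigl(\sum_{j\ge 0}\norm{(1-\chi_0)\varphi(2^{-2j}H_0)e^{-itH}u_0}_{L^p([-\delta,\delta];L^q)}^2\Bigr)^{1/2}.
\end{align*}
The substantive task is summing in $j$. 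I would choose $\tilde\varphi\in C_0^\infty((0,\infty))$ with $\tilde\varphi\equiv 1$ on $\supp\varphi$ and bounded overlap for the family $\tilde\varphi(2^{-2j}\cdot)$, set $h=2^{-j}$, and decompose
\begin{align*}
\varphi(h^2H_0)e^{-itH}u_0 = \varphi(h^2H_0)e^{-itH}\tilde\varphi(h^2H_0)u_0 - \varphi(h^2H_0)[e^{-itH},\tilde\varphi(h^2H_0)]u_0,
\end{align*}
which uses $\varphi\tilde\varphi=\varphi$ and the commutativity of the functional calculus of $H_0$. The first piece is bounded by \eqref{2_proposition_1_1} applied to the localized datum $\tilde\varphi(h^2H_0)u_0$, giving $\lesssim\norm{\tilde\varphi(2^{-2j}H_0)u_0}_{L^2}$, which is square-summable by the spectral theorem and bounded overlap.

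The hard part is the commutator term. Duhamel's formula combined with $[H_0,\tilde\varphi(h^2H_0)]=0$ gives
\begin{align*}
[e^{-itH},\tilde\varphi(h^2H_0)] = -i\int_0^t e^{-i(t-s)H}[V,\tilde\varphi(h^2H_0)]e^{-isH}\,ds.
\end{align*}
By Lemma \ref{2_lemma_1}, $\tilde\varphi(h^2H_0)$ coincides with an $h$-PDO $a(x,hD)$ modulo smoothing, and the symbolic calculus \eqref{symbolic_calculus} identifies the principal symbol of $[V,a(x,hD)]$ as $(h/i)\{V,a\}=-(h/i)\partial_xV\cdot\partial_\xi a$. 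Since $\nu\ge 1$ forces $\abs{\partial_xV}\lesssim 1$, this symbol lies in $h\cdot S(1,g)$, so $\norm{[V,\tilde\varphi(h^2H_0)]}_{\L(L^2)}\lesssim h$; the $O(h^2)$ remainders in \eqref{symbolic_calculus} are handled similarly. Combining with $\norm{\varphi(h^2H_0)}_{\L(L^2,L^q)}\lesssim h^{-d(1/2-1/q)}$ from Lemma \ref{2_lemma_1} and integrating over $s,t\in[-\delta,\delta]$, the commutator contribution is bounded by $h^{1-d(1/2-1/q)}\norm{u_0}_{L^2}=h^{1-2/p}\norm{u_0}_{L^2}$ via the admissibility relation \eqref{admissible}; for $p>2$ this is $O(2^{-\epsilon j})$ with $\epsilon=1-2/p>0$, hence square-summable in $j$.

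The main obstacle is the commutator bound $\norm{[V,\tilde\varphi(h^2H_0)]}_{\L(L^2)}=O(h)$: this is where the linear-growth hypothesis $\nu\ge 1$ is essential, since it is the weakest decay on $\partial_xV$ that keeps the Poisson-bracket principal symbol in a bounded symbol class. The endpoint restriction $p\ne 2$ is then forced by the need for $h^{1-2/p}$ to be square-summable across dyadic frequencies, matching the statement of Theorem \ref{theorem_1}(i).
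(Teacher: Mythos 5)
Your argument follows the paper's strategy almost exactly: Littlewood--Paley decomposition in $H_0$ combined with Minkowski, a commutator decomposition isolating a frequency-localized initial datum from a Duhamel term in $[V,\tilde\varphi(h^2H_0)]$, the $O(h)$ operator bound on this commutator (which you correctly trace to $\nu\ge1$ via $|\partial_xV|\lesssim1$, filling in the detail behind the paper's terse ``$[V,\varphi(h^2H_0)]=O(h)$''), and a time partition of $[-T,T]$. The one genuine omission is the low-frequency regime: the hypothesis \eqref{2_proposition_1_1} is only assumed for $h\in(0,h_0]$, so for the finitely many $j$ with $2^{-j}>h_0$ you cannot invoke it as written. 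The paper handles these separately, bounding the finite sum over $0\le j\le[-\log h_0]+1$ crudely using the $\L(L^2,L^q)$ bound on $\varphi(2^{-2j}H_0)$ from Lemma \ref{2_lemma_1} and the unitarity of $e^{-itH}$. You should add that step; otherwise the square-sum argument only covers the dyadic tail. The other differences (whether the wider cutoff sits on the left of $e^{-itH}$ as in the paper or on the right as in yours; doing the time partition first rather than last) are stylistic and immaterial.
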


\begin{proof}
By Lemma \ref{2_lemma_2} with $f=e^{-itH}u_0$, the Minkowski inequality and the unitarity of $e^{-itH}$ on $L^2(\R^d)$, we have
\begin{align*}
&\norm{(1-\chi_0)e^{-itH}u_0}_{L^p([-\delta,\delta];L^q(\R^d))}\\
&\lesssim \norm{u_0}_{L^2(\R^d)}
+ \left( \sum_{j=0}^\infty\norm{(1-\chi_0)\varphi(2^{-2j}H_0)e^{-itH}u_0}^2_{L^p([-\delta,\delta];L^q(\R^d))} \right)^{1/2}.
\end{align*}
For $0 \le j \le [-\log h_0]+1$, we have the bound
\begin{align*}
&\sum_{j=0}^{[-\log h_0]+1}\norm{(1-\chi_0)\varphi(2^{-2j}H_0)e^{-itH}u_0}^2_{L^p([-\delta,\delta];L^q(\R^d))}\\
&\lesssim \sum_{j=0}^{[-\log h_0]+1}\norm{\varphi(2^{-2j}H_0)}_{\L(L^2(\R^d),L^q(\R^d))}\norm{e^{-itH}u_0}_{L^\infty([-\delta,\delta];L^2(\R^d))}\\
&\lesssim ([-\log h_0]+1)2^{([-\log h_0]+1)d(1/2-1/q)}\norm{u_0}_{L^2(\R^d)}.
\end{align*}
Choosing $\psi \in C_0^\infty(\R)$ with $\psi\equiv 1$ on $\supp \varphi$, we can write
\begin{align*}
&\varphi(h^2H_0)e^{-itH}\\
&=\psi(h^2H_0)e^{-itH}\varphi(h^2H_0)+\psi(h^2H_0)i\int_0^t e^{-i(t-s)H}[V,\varphi(h^2H_0)]e^{-isH}ds\\
&=\psi(h^2H_0)e^{-itH}\varphi(h^2H_0)+R(t,h).
\end{align*}
Since $[H,\varphi(h^2H_0)]=[V,\varphi(h^2H_0)]=O(h)$ on $L^2(\R^d)$, the remainder term $R(t,h)$ satisfies
\begin{equation}
\begin{aligned}
\label{2_proposition_1_proof_1}
&\sup_{0 \le t \le 1}\norm{R(t,h)}_{\L(L^2(\R^d),L^q(\R^d))}\\
&\lesssim \norm{\psi(h^2H_0)}_{\L(L^2(\R^d),L^q(\R^d))}\norm{[V,\varphi(h^2H_0)]}_{\L(L^2(\R^d))}\\
&\lesssim h^{-d(1/2-1/q)+1}.
\end{aligned}
\end{equation}
We here note that $\gamma:=-d(1/2-1/q)+1=-2/p+1>0$ since $p>2$. By \eqref{2_proposition_1_1}, \eqref{2_proposition_1_proof_1} with $h=2^{-j}$ and the almost orthogonality of $\supp \varphi(2^{-2j}\cdot)$, we obtain
\begin{align*}
&\sum_{j=[-\log h_0]}^\infty\norm{(1-\chi_0)\varphi(2^{-2j}H_0)e^{-itH}u_0}^2_{L^p([-\delta,\delta];L^q(\R^d))} \\
&\lesssim 
\sum_{j=[-\log h_0]}^\infty\left(\norm{\varphi(2^{-2j}H_0)u_0}^2_{L^2(\R^d)}+2^{-2\gamma j}\norm{u_0}^2_{L^2(\R^d)}\right)\\
&\lesssim \norm{u_0}^2_{L^2(\R^d)}, 
\end{align*}
Combining with the bound for $0 \le j \le [-\log h_0]+1$, we have
$$\norm{(1-\chi_0)e^{-itH}u_0}_{L^p([-\delta,\delta];L^q(\R^d))} 
\lesssim \norm{u_0}_{L^2(\R^d)}. 
$$
Finally, we split the time interval $[-T,T]$ into $([T/\delta]+1)$ intervals with size $2\delta$, and obtain
\begin{align*}
&\norm{(1-\chi_0)\psi(h^2H_0)e^{-itH}u_0}_{L^p([-T,T];L^q(\R^d))}\\
&\le \sum_{k=-[T/\delta]}^{[T/\delta]+1}\norm{(1-\chi_0)\psi(h^2H_0)e^{-itH}e^{-i(k+1)H}u_0}_{L^p([-\delta,\delta];L^q(\R^d))}\\
&\le C_T\norm{u_0}_{L^2(\R^d)}.
\end{align*}
  \end{proof}


\section{Isozaki-Kitada parametrix}
\label{Isozaki-Kitada_parametrix}
In this section we assume Assumption \ref{assumption_A} with $0<\mu=\nu<1/2$ 
without loss of generality, and construct the Isozaki-Kitada parametrix. Since the potential $V$ can grow at infinity, it is difficult to construct directly the Isozaki-Kitada parametrix for $e^{-itH}$ even though we restrict it in an outgoing or incoming region. To overcome this difficulty, we approximate $e^{-itH}$ as follows. Let $\rho \in C_0^\infty(\R^d)$ be a cut-off function such that $\rho(x)=1$ if $|x| \le 1$ and $\rho(x)=0$ if $|x| \ge 2$. For a small constant $\ep>0$ and $h \in (0,1]$, we define $H_h$ by
$$
H_h=H_0+V_h,\quad V_h=V(x)\rho(\ep h x). 
$$
We note that, for any fixed $\ep>0$, 
$$h^2|\partial_x^\alpha V_h(x)| \le C_{\alpha} h^2\<x\>^{2-\mu-|\alpha|} \le C_{\ep,\alpha}\<x\>^{-\mu-|\alpha|},\quad x \in \R^d,$$
where $C_{\ep,\alpha}$ may be taken uniformly with respect to $h \in (0,1]$. Such a type modification has been used to prove Strichartz estimates and local smoothing effects for Schr\"odinger equations with super-quadratic potentials (see, Yajima-Zhang \cite[Section 4]{Yajima_Zhang}).

For $R>0$, an open interval $J\Subset (0,\infty)$ and $-1<\sigma<1$, we define the outgoing and incoming regions by
\begin{align*}
\Gamma^\pm(R,J,\sigma):=\left\{(x,\xi) \in \R^{2d};|x|>R,\ |\xi| \in J,\ \pm \frac{x\cdot\xi}{|x||\xi|} > -\sigma\right\}, 
\end{align*}
respectively. Since $H_0+h^2V_h$ is a long-range perturbation of $-\Delta/2$, we have the following theorem due to Robert \cite{Robert} and Bouclet-Tzvetkov \cite{Bouclet_Tzvetkov_1}. 

\begin{theorem}						
\label{3_theorem_1}
Let $J,J_0,J_1$ and $J_2$ be relatively compact open intervals, $\sigma,\sigma_0,\sigma_1$ and $\sigma_2$ real numbers so that
$
J\Subset J_0\Subset J_1\Subset J_2 \Subset (0,\infty)$ and $
-1<\sigma<\sigma_0<\sigma_1<\sigma_2<1. 
$
Fix arbitrarily $\ep>0$. Then there exist $R_0>0$ large enough and $h_0>0$ small enough such that the followings hold.\\
\emph{(i)} There exist two families of smooth functions 
$$
\{S^+_h;h \in (0,h_0],R\ge R_0\},\ \{S^-_h;h \in (0,h_0],R\ge R_0\} \subset C^\infty(\R^{2d};\R)
$$ satisfying the Eikonal equation associated to $k+h^2V_h$:
$$
k(x,\partial_x S^\pm_h(x,\xi))+h^2V_h(x)=\frac12|\xi|^2,\quad 
(x,\xi) \in \Gamma^\pm(R^{1/4},J_2,\sigma_2),\quad h \in (0,h_0],
$$ 
respectively, such that
\begin{align}
\label{3_theorem_1_1}
|\dderiv{x}{\xi}{\alpha}{\beta}(S_h^\pm(x,\xi)-x\cdot\xi)| 
\le C_{\alpha\beta}\<x\>^{1-\mu-|\alpha|},\quad
\alpha,\beta \in \Z_+^d,\ 
x,\xi \in \R^d,
\end{align}
where $C_{\alpha\beta}>0$ may be taken uniformly with respect to $R$ and $h$.\\
\emph{(ii)} For every $R \ge R_0$, $h \in (0,h_0]$ and $N=0,1,...$, we can find 
$$
b^\pm_h=\sum_{j=0}^N h^j b_{h,j}^\pm \in S(1,g)\quad \text{with}\quad \supp b_{h,j}^\pm \subset \Gamma^\pm(R^{1/3},J_1,\sigma_1)
$$
such that, for every $a^\pm \in S(1,g)$ with $\supp a^\pm \subset\Gamma^\pm(R,J,\sigma)$, there exist 
$$
c^\pm_h=\sum_{j=0}^N h^j c_{h,j}^\pm(h) \in S(1,g)\quad \text{with}\quad \supp c_{h,j}^\pm \subset \Gamma^\pm(R^{1/2},J_0,\sigma_0)
$$
such that, for all $\pm t \ge0$, 
$$e^{-ithH_h}a^\pm(x,hD)=\F_{{\IK}}(S_h^\pm,b^\pm_h)e^{ith\Delta/2}\F_{{\IK}}(S^\pm_h,c^\pm_h)^*+Q_{{\IK}}^\pm(t,h,N),$$
 respectively, where $\F_{{\IK}}(S_h^\pm,w)$ are Fourier integral operators defined by 
$$
\F_{{\IK}}(S_h^\pm,w)f(x)=\frac{1}{(2\pi h)^{d}}\int e^{i(S^\pm_h(x,\xi)-y\cdot\xi)/h}w(x,\xi)f(y)dyd\xi,
$$
respectively. Moreover, for all $s \in \R$ there exists $C_N>0$ such that
\begin{align}
\label{3_theorem_1_2}
\norm{(h^2H_h+L)^{s}Q_{{\IK}}^\pm(t,h,N)}_{\L(L^2(\R^d))} \le C_Nh^{N-1}
\end{align}
uniformly with respect to $h \in (0,h_0]$ and $0 \le \pm t \le h^{-1}$, where $L>1$, independent of $h$, $t$ and $x$, is a large constant so that $h^2V_h+L \ge 1$. \\
\emph{(iii)} The distribution kernels $K_{{\IK}}^\pm(t,h,x,y)$ of $\F_{{\IK}}(S_h^\pm,b^\pm_h)e^{-ith\Delta/2}\F_{{\IK}}(S_h^\pm,c^\pm_h)^*$ satisfy dispersive estimates:
\begin{align}
\label{dispersive_1}
|K_{{\IK}}^\pm(t,h,x,y)| \le C|th|^{-d/2}, 
\end{align}
for any $h \in (0,h_0]$, $0\le \pm t \le h^{-1}$ and $x,\xi \in \R^d$, respectively.
\end{theorem}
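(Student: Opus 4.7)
The plan is to follow the standard Isozaki-Kitada scheme (Robert~\cite{Robert}, Bouclet-Tzvetkov~\cite{Bouclet_Tzvetkov_1}) applied to the modified operator $H_h=H_0+V_h$, tracking uniformity in $h$ throughout. The crucial observation is that, by construction of $V_h$, the symbol $h^2V_h$ lies in the long-range class $S(\<x\>^{-\mu},g)$ uniformly in $h\in(0,h_0]$; under the natural semiclassical rescaling this places us in the usual framework of a long-range perturbation of $-\Delta/2$ on the time window $|t|\le h^{-1}$, where the standard parametrix machinery applies.

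For (i), I would write $S_h^\pm(x,\xi)=x\cdot\xi+\phi_h^\pm(x,\xi)$, which turns the Eikonal equation into the nonlinear equation
\begin{equation*}
\xi\cdot\partial_x\phi_h^\pm + r(x,\xi,\partial_x\phi_h^\pm) + h^2 V_h(x)=0,
\end{equation*}
where $r$ is of order $\<x\>^{-\mu}$ plus a term quadratic in $\partial_x\phi_h^\pm$. Integrating along the characteristics of $\xi\cdot\partial_x$ from infinity on the outgoing or incoming side, a Picard iteration in the Banach space of symbols satisfying \eqref{3_theorem_1_1} yields a unique solution; convergence rests on the geometric bound $|x-s\xi|\gtrsim\<x\>+|s|$ valid on $\Gamma^\pm(R^{1/4},J_2,\sigma_2)$, and all estimates are uniform in $h\in(0,h_0]$ by the scale-invariant bounds on $h^2V_h$.

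For (ii), the amplitudes $b_h^\pm=\sum_{j=0}^N h^j b_{h,j}^\pm$ are constructed so that the symbol of $(i\partial_t-hH_h)\F_{\IK}(S_h^\pm,b_h^\pm)e^{ith\Delta/2}$, after extraction of the oscillating factor $e^{iS_h^\pm/h}$ and use of the Eikonal equation, vanishes to order $h^{N+1}$. This produces a triangular system of transport equations along the characteristic vector field $X_{S_h^\pm}=\partial_\xi k(x,\partial_x S_h^\pm)\cdot\partial_x$ (plus a subprincipal zeroth-order term), solvable by integration from infinity and yielding $b_{h,j}^\pm\in S(1,g)$ supported in $\Gamma^\pm(R^{1/3},J_1,\sigma_1)$. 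The symbol $c_h^\pm$ is then obtained from the requirement $\F_{\IK}(S_h^\pm,b_h^\pm)^*a^\pm(x,hD)=\F_{\IK}(S_h^\pm,c_h^\pm)+O_{L^2}(h^N)$ via the Kuranishi trick, which recasts the composition as an $h$-pseudodifferential operator with full asymptotic expansion supported in $\Gamma^\pm(R^{1/2},J_0,\sigma_0)$. The remainder bound \eqref{3_theorem_1_2} then follows from Duhamel's formula
\begin{equation*}
Q_{\IK}^\pm(t,h,N)=-i\int_0^t e^{-i(t-s)hH_h}\bigl[(i\partial_s-hH_h)\F_{\IK}(S_h^\pm,b_h^\pm)e^{ish\Delta/2}\F_{\IK}(S_h^\pm,c_h^\pm)^*\bigr]\,ds,
\end{equation*}
combined with the Asada-Fujiwara $L^2$-boundedness of the FIOs (available thanks to \eqref{3_theorem_1_1}), the $O_{L^2}(h^N)$-smallness of the integrand, and the time length $|t|\le h^{-1}$. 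Powers $(h^2H_h+L)^s$ are handled by commuting through the parametrix, using that the Eikonal equation makes $(h^2H_h+L)\F_{\IK}(S_h^\pm,b_h^\pm)$ act, modulo lower-order terms, as multiplication by $\tfrac12|\xi|^2+L\ge 1$ in the $\xi$-variable.

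For (iii), Fourier inversion in the intermediate variables reduces $K_{\IK}^\pm(t,h,x,y)$ to the single oscillatory integral
\begin{equation*}
(2\pi h)^{-d}\int e^{i(S_h^\pm(x,\xi)-S_h^\pm(y,\xi)+th|\xi|^2/2)/h}\,b_h^\pm(x,\xi)\,\overline{c_h^\pm(y,\xi)}\,d\xi,
\end{equation*}
and the dispersive bound \eqref{dispersive_1} follows from standard stationary/non-stationary phase analysis: on the support of the amplitudes, for $R_0$ large enough, the map $\xi\mapsto\partial_\xi S_h^\pm(x,\xi)-\partial_\xi S_h^\pm(y,\xi)+th\xi$ is a local diffeomorphism whose Jacobian has magnitude $\sim|th|$, which permits either the method of stationary phase (yielding the decay factor $|th|^{-d/2}$) or repeated integration by parts in $\xi$ when no stationary point lies in the support. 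The main obstacle throughout is maintaining uniformity in $h$ at every step, which is secured by the uniform long-range bounds on $h^2V_h$ coming from the cutoff $\rho(\ep h x)$.
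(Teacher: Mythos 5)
Your outline is the standard Isozaki--Kitada construction of Robert and Bouclet--Tzvetkov, and it is essentially sound; but note that the paper does not reprove this machinery at all. It treats parts (i), (iii) and the parametrix identity in (ii) as known (citing \cite{Robert} and \cite{Bouclet_Tzvetkov_1}, the uniformity in $h$ being exactly the point you identify: $h^2V_h\in S(\<x\>^{-\mu},g)$ uniformly because of the cutoff $\rho(\ep hx)$), and the only thing it actually proves is the weighted remainder bound \eqref{3_theorem_1_2}. There it writes $Q^+_{\IK}$ explicitly as three pieces --- $-h^{N+1}e^{-ithH_h}q_1(h,x,hD)$, a Duhamel integral of $h^N e^{-i(t-\tau)hH_h}\F_{\IK}(S^+_h,q_2(h))e^{i\tau h\Delta/2}\F_{\IK}(S^+_h,c^+_h)^*$, and an integral of a rapidly decaying kernel $\wtilde Q(\tau,h)$ --- uses that $(h^2H_h+L)^{s}$ commutes with $e^{-ithH_h}$, bounds $(h^2H_0+1)^{s}$ applied to the static factors by elliptic regularity, and transfers the weight via $\|(h^2H_h+L)^{s}(h^2H_0+1)^{-s}\|\le C_s$, proved for integer $s$ from $1\le h^2V_h+L\lesssim1$ and extended to real $s$ by interpolation. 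Your corresponding step (``commute $(h^2H_h+L)^s$ through the parametrix, where it acts as multiplication by $\tfrac12|\xi|^2+L$'') is the weakest point of your proposal: for non-integer $s$ this commutation is not a symbolic-calculus triviality, and it does not directly address the factors $e^{-i(t-\tau)hH_h}$ sitting inside the remainder; the paper's reduction to $(h^2H_0+1)^{s}$ plus interpolation is the cleaner and safer route, and you should adopt something like it.

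One further caution on your part (iii): the Jacobian of $\xi\mapsto\partial_\xi S^\pm_h(x,\xi)-\partial_\xi S^\pm_h(y,\xi)\mp th\xi$ is not visibly $\sim|th|$ from \eqref{3_theorem_1_1} alone, since $\partial_\xi^2 S^\pm_h(x,\xi)$ is only $O(\<x\>^{1-\mu})$ individually. The standard fix is the Kuranishi change of variables $\eta=\wtilde S^\pm_h(x,y,\xi)=\int_0^1\partial_xS^\pm_h(y+\theta(x-y),\xi)\,d\theta$, after which the phase becomes $(x-y)\cdot\eta\mp\tfrac{t}{2}|[\wtilde S^\pm_h]^{-1}(\eta)|^2$ with $\xi$-Hessian $\mp t(\Id+O(R^{-\mu/3}))$, and stationary phase then gives \eqref{dispersive_1}. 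This is implicit in the references you cite, but as stated your argument has a gap at exactly this point.
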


\begin{proof}
This theorem is basically known, and we only check \eqref{3_theorem_1_2} for the outgoing case. For the detail of the proof, we refer to \cite[Section 4]{Robert} and \cite[Section 3]{Bouclet_Tzvetkov_1}. We also refer to the original paper by Isozaki-Kitada \cite{Isozaki_Kitada}. 

The remainder $Q_{{\IK}}^+(t,h,N)$ consists of the following three parts:
\begin{align*}
&-h^{N+1}e^{-ithH_h}q_1(h,x,hD),\\
&-ih^N\int_0^t e^{-i(t-\tau)hH_h}\F_{{\IK}}^+(S_h^+,q_2(h))e^{i\tau h\Delta/2}\F_{{\IK}}^+(S_h^+,c^+_h)^*d\tau,\\
&-(i/h)\int_0^t e^{-i(t-\tau)hH_h}\wtilde{Q}(\tau,h)d\tau, 
\end{align*}
where $\{q_1(h,\cdot,\cdot),q_2(h,\cdot,\cdot);h \in (0,h_0]\}\subset \bigcap_{M = 1}^\infty S(\<x\>^{-N}\<\xi\>^{-M},g)$ is a bounded set, and $\wtilde{Q}(s,h)$ is a integral operator with a kernel $\tilde{q}(s,h,x,y)$ satisfying
$$
|\dderiv{x}{\xi}{\alpha}{\beta}\tilde{q}(\tau,h,x,y)| \le C_{\alpha\beta} h^{M-|\alpha+\beta|}(1+|\tau|+|x|+|y|)^{-M+|\alpha+\beta|},\ \tau\ge 0,
$$
for any $M\ge 0$. A standard $L^2$-boundedness of $h$-PDO and FIO then imply 
$$
\norm{(h^2H_0+1)^{s}\left(q_1(h,x,hD)+\F_{{\IK}}^+(S_h^+,q_2(h))\right)}_{\L(L^2(\R^d))}\le C_s,
$$
and a direct computation yields
$$
\norm{(h^2H_0+1)^{s}\wtilde{Q}(\tau,h)}_{\L(L^2(\R^d))}\le C_Mh^M. 
$$
On the other hand, if we choose a constant $L>0$ so large that $h^2V_h+L\ge 1$, then we have
\begin{align}
\label{3_theorem_1_proof_3}
\norm{(h^2H_h+L)^s(h^2H_0+1)^{-s}}_{\L(L^2(\R^d))} \le C_s. 
\end{align}
Indeed, if $s$ is a positive integer, then \eqref{3_theorem_1_proof_3} is obvious since $h^2V_h+L \lesssim 1$. For any negative integer $s$, \eqref{3_theorem_1_proof_3} follows from the fact that $h^2H_0+1 \le h^2H_h+L$. For general $s\in\R$, we obtain \eqref{3_theorem_1_proof_3} by an interpolation. \eqref{3_theorem_1_2} follows from the above three estimates since $(h^2H_h+L)^s$ commutes with $e^{-ithH_h}$.
  \end{proof}

The following key lemma tells us that one can still construct the Isozaki-Kitada parametrix of the original propagator $e^{-ithH}$ if we restrict the support of initial data in the region $\{x;|x| <h^{-1}\}$.
\begin{lemma}							
\label{3_lemma_1}
Suppose that $\{a_h^\pm\}_{h \in (0,1]}$ are bounded sets in $S(1,g)$ and satisfy
$$\supp a_h^\pm \subset \Gamma^\pm(R,J,\sigma)\cap \{x;|x|<h^{-1}\}, $$
respectively. Then for any $M \ge 0$, $h \in (0,h_0]$ and $0 \le \pm t \le h^{-1}$, we have 
$$\norm{(e^{-ithH}-e^{-ithH_h})a_h^\pm(x,hD)}_{\L(L^2(\R^d))} \le C_M h^M,$$
where $C_M>0$ is independent of $h$ and $t$.
\end{lemma}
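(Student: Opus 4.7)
The plan is to reduce the lemma to a uniform operator-norm bound via Duhamel's principle and then exploit the Isozaki-Kitada parametrix of Theorem \ref{3_theorem_1} combined with a non-stationary phase argument.

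By Duhamel's formula,
\[
(e^{-ithH}-e^{-ithH_h})a_h^\pm(x,hD) = -ih\int_0^t e^{-i(t-s)hH}(V-V_h)\,e^{-ishH_h}a_h^\pm(x,hD)\,ds.
\]
Since $e^{-i(t-s)hH}$ is unitary on $L^2$ and $|t|\le h^{-1}$, the prefactor $h$ is absorbed by the length of the $s$-interval, so the lemma reduces to the uniform bound
\[
\sup_{0\le\pm s\le h^{-1}}\bignorm{(V-V_h)\,e^{-ishH_h}a_h^\pm(x,hD)}_{\L(L^2(\R^d))} = O(h^M)\quad\forall M\ge 0.
\]
The underlying classical picture is that, in semi-classical time $|s|\le h^{-1}$, trajectories of the symbol $k+h^2V_h$ starting in $\Gamma^\pm(R,J,\sigma)\cap\{|x|<h^{-1}\}$ with bounded momentum travel in position a distance of at most order $h^{-1}$, and hence never reach the support $\{|x|\ge 1/(\ep h)\}$ of $V-V_h$ when $\ep$ is small.

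To make this rigorous I would invoke Theorem \ref{3_theorem_1} with $a^\pm:=a_h^\pm$, writing
\[
e^{-ishH_h}a_h^\pm(x,hD) = \F_{\IK}(S_h^\pm,b_h^\pm)\,e^{ish\Delta/2}\,\F_{\IK}(S_h^\pm,c_h^\pm)^* + Q^\pm_{\IK}(s,h,N),
\]
and note that, by the standard transport-equation construction of the Isozaki-Kitada amplitudes, $c_h^\pm$ inherits from $a_h^\pm$ the spatial restriction $\supp c_h^\pm\subset\Gamma^\pm(R^{1/2},J_0,\sigma_0)\cap\{|w|<h^{-1}\}$. Integrating out the intermediate free variable, the kernel of the main term is
\[
K_P(s;x,w) = (2\pi h)^{-d}\int e^{i\Phi(s;x,w,\xi)/h}\,b_h^\pm(x,\xi)\,\overline{c_h^\pm(w,\xi)}\,d\xi,
\]
with $\Phi = S_h^\pm(x,\xi)-S_h^\pm(w,\xi)-\tfrac{s}{2}|\xi|^2$. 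By \eqref{3_theorem_1_1}, $\partial_\xi\Phi = (x-w) - s\xi + O(\<x\>^{1-\mu}+\<w\>^{1-\mu})$. On $\supp(V-V_h)(x)\cdot\overline{c_h^\pm(w,\xi)}$ we have $|x|\ge 1/(\ep h)$, $|w|<h^{-1}$, $|s\xi|\le Ch^{-1}$ and bounded $|\xi|$, so for $\ep$ small one gets $|\partial_\xi\Phi|\gtrsim h^{-1}$ (and $|\partial_\xi\Phi|\gtrsim |x|$ for $|x|\ge 2/h$). Iterated integration by parts in $\xi$ then gives
\[
|K_P(s;x,w)|\le C_M\,h^{-d}\bigl(h/|\partial_\xi\Phi|\bigr)^M\quad\forall M\ge 0,
\]
which, fed into Schur's test together with $|V-V_h|\lesssim\<x\>^{2-\mu}$ and the support bound on $c_h^\pm$, produces $\bignorm{(V-V_h)\,\F_{\IK}(S_h^\pm,b_h^\pm)\,e^{ish\Delta/2}\,\F_{\IK}(S_h^\pm,c_h^\pm)^*}_{\L(L^2(\R^d))} = O(h^M)$ for every $M\ge 0$.

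For the remainder $Q^\pm_{\IK}(s,h,N)$, I would use Theorem \ref{3_theorem_1}(ii), namely $\bignorm{(h^2H_h+L)^\sigma Q^\pm_{\IK}}_{\L(L^2(\R^d))}\le C_N h^{N-1}$, and decompose
\[
(V-V_h)Q^\pm_{\IK} = \bigl[(V-V_h)(h^2H_h+L)^{-\sigma}\bigr]\cdot(h^2H_h+L)^\sigma Q^\pm_{\IK}.
\]
I expect this last step to be the main obstacle: since $V-V_h$ grows like $\<x\>^{2-\mu}$ while $h^2H_h$ contains only the cut-off potential $h^2V_h$, the multiplier $(V-V_h)(h^2H_h+L)^{-\sigma}$ is not obviously $L^2$-bounded with a polynomial-in-$h^{-1}$ loss. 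My remedy is to unpack the explicit decomposition of $Q^\pm_{\IK}$ from the proof of Theorem \ref{3_theorem_1} into its three pieces (two involving the polynomially decaying symbols $q_1,q_2\in S(\<x\>^{-N}\<\xi\>^{-M},g)$ and one involving the rapidly decaying kernel $\tilde q$), and to bound each piece directly against the unbounded multiplier $V-V_h$ accepting only a polynomial-in-$h^{-1}$ loss that is absorbed by taking $N$ large. Combined with the main-term bound of the previous paragraph, this yields the required $O(h^M)$ estimate and completes the proof.
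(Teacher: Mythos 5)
Your overall strategy (Duhamel, then the Isozaki--Kitada parametrix of Theorem \ref{3_theorem_1} plus non-stationary phase) overlaps substantially with the paper, and the treatment of the main FIO term is essentially correct: the support property $\supp c^\pm_h \subset \{|x|<h^{-1}\}$ is established in the paper via the triangular system for the $c^+_{h,j}$, and the non-stationary phase argument with $|\partial_\xi\Phi^\pm_h|\gtrsim 1+|x|+|y|+|\tau|$ on $\{|x|>(\ep h)^{-1}\}\times\{|y|<h^{-1}\}$ is exactly what the paper does.

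However, the remainder term is a genuine gap, and the remedy you propose does not close it. After a single Duhamel step you reduce to $\norm{(V-V_h)e^{-ishH_h}a_h^\pm(x,hD)}$, and then you must control $(V-V_h)Q^\pm_{\IK}$. You correctly observe that $(V-V_h)(h^2H_h+L)^{-\sigma}$ is not bounded; the reason is worth being blunt about: $V-V_h$ grows like $\<x\>^{2-\mu}$ and is therefore an \emph{unbounded} multiplier on $L^2$, while $h^2H_h$ contains only the truncated $h^2V_h$ and so gives no spatial weight. Crucially, unpacking $Q^\pm_{\IK}$ into its three pieces does not help: each piece begins with a propagator $e^{-ithH_h}$ or $e^{-i(t-\tau)hH_h}$ on the far left (exactly where $V-V_h$ must be applied), and these propagators carry no a priori spatial localization. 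Thus $\norm{(V-V_h)e^{-ithH_h}q_1(h,x,hD)}_{\L(L^2)}$ is not ``polynomially large in $h^{-1}$'' that can be absorbed by taking $N$ large --- it is simply infinite without a propagation estimate. Trying to push $V-V_h$ through the propagator by another commutator step would, in effect, reproduce the iteration scheme below, which is the missing idea.

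The paper's resolution is a different and essential ingredient: before touching the parametrix, it iterates the Duhamel formula, repeatedly commuting $W_h=V-V_h$ past $e^{-ishH_h}$. Each commutation produces $[H_0,W_h]$, then $[H_h,[H_0,W_h]]$, etc.; by Assumption \ref{assumption_A} every such commutator splits into a second-order differential operator with \emph{bounded} coefficients plus a potential-like term whose growth rate drops by $\mu$ at each step. After $N_\mu=[1/\mu]+1$ iterations the remaining potential $W_{N_\mu}$ is bounded. All these operators are supported in $\{|x|>(\ep h)^{-1}\}$, so the lemma reduces to the single estimate $\norm{(1-\rho(\ep hx))\partial_x^\alpha e^{-i\tau hH_h}a_h^+(x,hD)}_{\L(L^2)}\le C_{M,\alpha}h^{M-|\alpha|}$, where the factor $(1-\rho(\ep hx))$ is \emph{bounded} (not growing). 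Now the remainder $Q^\pm_{\IK}$ is handled cleanly through $\norm{\<D\>^s Q^\pm_{\IK}}\lesssim h^{N-1-s}$ (controlling the $\partial_x^\alpha$), and the main FIO term through the non-stationary phase argument you already have. Without the commutator iteration there is no way to reduce the unbounded multiplier $V-V_h$ to a bounded cutoff, and the proof does not go through.
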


\begin{proof}
We prove the lemma for the outgoing case only, and the proof of incoming case is completely analogous. We set $A=a_h^+(x,hD)$ and $W_h=V-V_h$. The Duhamel formula yields
\begin{align*}
&(e^{-ithH}-e^{-ithH_h})A\\
&=-ih \int_0^t e^{-i(t-s)hH}W_he^{-ishH_h}Ads\\
&=-i h\int_0^t e^{-i(t-s)hH}e^{-ishH_h}W_hAds\\
&\ \ \ -h^2\int_0^t e^{-i(t-s)hH}\int_0^s e^{-i(s-\tau)hH_h}[H_0,W_h]e^{-i\tau hH_h}Ad\tau ds.
\end{align*}
Since $\supp a_h^+(\cdot,\xi) \subset \{x; |x| < h^{-1}\}$, we learn 
$
\supp W_h \cap a^+_h(\cdot,\xi)=\emptyset
$ if $\ep<1$. Combining with the asymptotic formula \eqref{symbolic_calculus}, this support property implies 
$$
\norm{W_hA}_{\L(L^2(\R^d))} \le C_Mh^M
$$
for any $M \ge0$. A direct computation yields that $[H_0,W_h]$ is of the form 
$$
\sum_{|\alpha|=0,1} a_\alpha(x)\partial_x^\alpha,\quad
 \supp a_\alpha \subset \supp W_h,\quad
|\partial_x^\beta a_\alpha(x)|\le C_{\alpha\beta}\<x\>^{-\mu+|\alpha|-|\beta|}.
$$ 
The support property of $W_h$ again yields 
$$
\norm{[H_h,[H_0,W_h]]A}_{\L(L^2(\R^d))} \le C_Mh^M.
$$
We next consider $[H_h,[K,W_h]]$ which has the form
$$
\sum_{|\alpha|=1,2} b_\alpha(x)\partial_x^\alpha+W_1(x),\ 
$$
where $b_\alpha$ and $W_1$ are supported in $\supp W_h$ and satisfy
$$
|\partial_x^\beta b_\alpha(x)|\le C_{\alpha\beta}\<x\>^{-2-\mu+|\alpha|-|\beta|},\quad
|\partial_x^\beta W_1(x)|\le C_{\alpha\beta}\<x\>^{2-2\mu}. 
$$
 Setting $I_1=\sum_{|\alpha|=1,2} b_\alpha(x)\partial_x^\alpha$ and $N_\mu:=[1/\mu]+1$, we iterate this procedure $N_\mu$ times with $W_h$ replaced by $W_1$. $(e^{-ithH}-e^{-ithH_h})A$ then can be brought to a linear combination of the following forms  (modulo $O(h^M)$ on $L^2(\R^d)$):
\begin{align*}
\int_{t \ge s_1 \ge \cdots\ge s_j \ge 0} e^{-i(t-s_1)hH}e^{-i(s_1-s_j)hH_h}I_{j/2}e^{-is_jhH_h}Ads_j\cdots ds_1
\end{align*}
for $j=2m,\ m=1,2,..,N_\mu$, and
\begin{align*}
\int_{t \ge s_1 \ge \cdots \ge s_{N_\mu} \ge 0}e^{-i(t-s_1)hH}e^{-i(s_1-s_{N_\mu})hH_h}W_{N_\mu}e^{-is_{N_\mu}hH_h}Ads_{2N_\mu}\cdots ds_1,
\end{align*}
where $I_k$ are second order differential operators with smooth and bounded coefficients, and $W_{N_\mu}$ is a bounded function since $2-2\mu N_\mu<0$. Moreover, they are supported in $\{x;|x|>(\ep h)^{-1}\}$. 
Therefore, it is sufficient to show that, for any $h \in (0,h_0]$, $0 \le \tau \le h^{-1}$, $\alpha \in \Z^d_+$ and $M \ge 0$, 
\begin{align}
\label{3_lemma_1_1}
\norm{(1-\rho(\ep hx))\partial^\alpha_x e^{-i\tau hH_h}A}_{\L(L^2(\R^d))} \le C_{M,\alpha} h^{M-|\alpha|}. 
\end{align}
We now apply Theorem \ref{3_theorem_1} to $e^{-i\tau hH_h}A$ and obtain
\begin{align*}
e^{-i\tau hH_h}A=\F_{{\IK}}(S_h^+,b^+_h)e^{i\tau h\Delta/2}\F_{{\IK}}(S_h^+,c^+_h)^*+Q_{{\IK}}^+(t,h,N).
\end{align*}
Recall that the elliptic nature of $H_0$ implies, for every $s \ge0$, 
\begin{align*}
\norm{\<D\>^s(h^2H_0+1)^{-s/2}f}_{L^2(\R^d)} &\le Ch^{-s}\norm{f}_{L^2(\R^d)},\\
\norm{(h^2H_0+1)^{s/2}(h^2H_h+L)^{-s/2}f}_{L^2(\R^d)}&\le C\norm{f}_{L^2(\R^d)},
\end{align*}
if $L>0$ so large that $h^2H_h+L \ge 1$. Combining these estimates with \eqref{3_theorem_1_2}, the remainder satisfies
$$
\norm{\<D\>^s Q_{{\IK}}^+(t,h,N)f}_{L^2(\R^d)} \le C_{N,s}h^{N-1-s}\norm{f}_{L^2(\R^d)},\quad s \ge 0.
$$

The main term can be handled in terms of the non-stationary phase method as follows. The distribution kernel of the main term is given by 
\begin{align}
\label{3_lemma_1_2}
(2\pi h)^{-d}(1-\rho(\ep h x))\partial_x^\alpha \int e^{i\Phi^+_h(\tau,x,y,\xi)/h}b_h^+(x,\xi)\overline{c^+_h(y,\xi)}d\xi, 
\end{align}
where $\Phi^+_h(\tau,x,y,\xi)=S^+_h(x,\xi)-\frac12\tau|\xi|^2-S^+_h(y,\xi)$. We here claim that
\begin{align}
\label{3_lemma_1_3}
\supp c^+_h\subset \{(x,\xi)\in \R^{2d};a^+_h(x,\partial_\xi S^+_h(x,\xi))\neq 0\}.
\end{align}
This property follows from the construction of $c_j^+(h)$, $j=0,1,...,N$. We set 
$$
\wtilde{S}^+_h(x,y,\xi)=\int_0^1\partial_xS^+_h(y+\theta(x-y),\xi)d\theta. 
$$ 
Let $\xi \mapsto [\wtilde{S}^+_h]^{-1}(x,y,\xi)$ be the inverse map of $\xi \mapsto \wtilde{S}^+_h(x,y,\xi)$, and we denote their Jacobians by $A_1=|\det \partial_\xi\wtilde{S}^+_h(x,y,\xi)|$ and $A_2=|\det \partial_\xi[\wtilde{S}^+_h]^{-1}(x,y,\xi)|$, respectively. $c_j^+(h)$ then satisfy the following triangular system:
\begin{align*}
\overline{c_{h,j}^+(x,\xi)}=b_{h,0}^+(x,\xi)^{-1}\left(r_{h,j}^+(x,\wtilde{S}^+_h(x,y,\xi))A_1\right)\bigg|_{y=x},\quad j=0,1,...,N,
\end{align*}
where $r_{h,0}^+=a_h^+(x,\wtilde{S}^+_h(x,y,\xi))$ and $r_j^+$, $j\ge 1$, is a linear combination of $$
\frac{1}{i^{|\alpha|}\alpha !}
\left(
\partial_\xi^\alpha \partial_y^\alpha b_{h,k_0}^+(x,[\wtilde{S}^+_h]^{-1}(x,y,\xi))c_{h,k_1}^+(y,[\wtilde{S}^+_h]^{-1}(x,y,\xi))A_2
\right)\bigg|_{y=x},
$$
where $\alpha \in \Z_+^d$ and $k_0,k_1=0,1,...,j$ so that $0\le |\alpha| \le j,\ k_0+k_1=j-|\alpha|$ and $k_1 \le j-1$. 
Therefore, we inductively obtain 
$$
\supp c_{h,0}^+ \subset \supp r_0^+|_{y=x},\quad\supp c_{h,j}^+ \subset \supp c_{h,j-1}^+(h),\ j=1,2,...,N,
$$ and \eqref{3_lemma_1_3} follows. 
In particular, $c^+_h$ vanishes in the region $\{x;|x| \ge h^{-1}\}$. 
By using \eqref{3_theorem_1_1}, we have 
$$\partial_\xi\Phi^+_h(\tau,x,y,\xi)=(x-y)(\Id+O(R^{-\mu/3}))-\tau\xi,$$
which implies
$$|\partial_\xi\Phi^+_h(\tau,x,y,\xi)| \ge \frac{|x|}{2}-|y|-|\tau\xi|$$
as long as $R \ge 1$ large enough. 
We now set 
$$\ep=\frac{1}{2(\sup J_2)^{1/2}+2}.$$
Since $|x|>(\ep h)^{-1}$, $ |y|<h^{-1}$ and $|\xi|^2 \in J_2$ 
on the support of the amplitude, we have
$$|\partial_\xi\Phi^+_h(\tau,x,y,\xi)| >c (|x|+h^{-1})>c(1+|x|+|y|+|\tau|),\quad 0 \le \tau \le h^{-1},$$
for some $c>0$ independent of $h$. Therefore, integrating by parts \eqref{3_lemma_1_2} with respect to 
$$-ih\abs{\partial_\xi\Phi^+_h}^{-2}(\partial_\xi\Phi^+_h)\cdot\partial_\xi, $$
we obtain
\begin{align*}
&\left|(2\pi h)^{-d}(1-\rho(\ep h x))\partial_x^\alpha \partial_y^\beta \int e^{i\Phi^+_h(\tau,x,y,\xi)/h}b_h^+(x,\xi)\overline{c_h^+(y,\xi)}d\xi\right|\\
&\le C_{\alpha \beta M}h^{M-d-|\alpha+\beta|}(1+|x|+|y|+\tau)^{-M},
\end{align*}
for all $M \ge0$, $0 \le \tau\le h^{-1}$ and $\alpha,\beta \in \Z^d_+$. \eqref{3_lemma_1_1} then follows from the $L^2$-boundedness of FIOs. 
  \end{proof}


\section{WKB parametrix}
\label{WKB_parametrix}
In the previous section we proved that $e^{-ithH}$ is well approximated in terms of an Isozaki-Kitada parametrix on a time scale of order $h^{-1}$ if we localize the initial data in regions $\Gamma^\pm(R,J,\sigma)\cap\{x;R<|x| < h^{-1}\}$. Therefore, it remains to control $e^{-ithH}$ on a region $\{x;|x|\gtrsim h^{-1}\}$. In this section we construct the WKB parametrix for $e^{-ithH}a(x,hD)$, where $a \in S(1,g)$ with $\supp a \subset  \{(x,\xi) \in \R^{2d};|x|\gtrsim h^{-1},\ |\xi|^2\in J\}$. In what follows we assume that $H$ satisfies Assumption \ref{assumption_A} with $\mu=0$ and $\nu=1$. 

We first consider the phase function of the WKB parametrix, that is a solution to the time-dependent Hamilton-Jacobi equation generated by $p_h(x,\xi)=k(x,\xi)+h^2V(x)$. For $R>0$ and open interval $J\Subset (0,\infty)$, we set $$
\Omega(R,J):=\{(x,\xi)\in \R^{2d};|x| > R/2,\ |\xi|^2 \in J\}.
$$ 
We note that $\Omega(R_1,J_1) \subset \Omega(R_2,J_2)$ if $R_1>R_2$ and  $J_1\subset J_2$.
\begin{proposition}						
\label{WKB_2_proposition_1}
Choose arbitrarily an open interval $J\Subset (0,\infty)$. Then, there exist $\delta_0>0$ and $h_0>0$ small enough such that, for all $h\in (0,h_0]$, $0<R\le h^{-1}$ and $0<\delta\le\delta_0$, we can construct a family of smooth functions
$$\{\Psi_h(t,x,\xi)\}_{h \in (0,h_0]} \subset C^\infty((-\delta R,\delta R)\times\R^{2d})$$
such that $\Psi_h(t,x,\xi)$ satisfies the Hamilton-Jacobi equation associated to $p_h$:
\begin{equation}\left\{\begin{aligned}
\label{WKB_2_proposition_1_1}
\partial_t \Psi_h(t,x,\xi)
&=-p_h(x,\partial_x\Psi_h(t,x,\xi)),\quad 
0<|t|<\delta R,\ (x,\xi) \in \Omega(R,J), \\
\Psi_h(0,x,\xi)
&=x\cdot\xi,\quad 
(x,\xi)\in \Omega(R,J). 
\end{aligned}\right.\end{equation}
Moreover, for all $|t|\le \delta R$ and $\alpha,\beta\in \Z^d_+$, $\Psi_h(t,x,\xi)$ satisfies 
\begin{align}
\label{WKB_2_proposition_1_2}
&|\dderiv{x}{\xi}{\alpha}{\beta}(\Psi_h(t,x,\xi)-x\cdot\xi)|
\le C\delta R^{1-|\alpha|},\quad x,\xi \in \R^d,\ |\alpha+\beta|\ge2,\\
\label{WKB_2_proposition_1_4}
&|\dderiv{x}{\xi}{\alpha}{\beta}\left(\Psi_h(t,x,\xi)-x\cdot\xi+tp_h(x,\xi)\right)|
\le C_{\alpha\beta} \delta R^{|\alpha|}|t|, \quad x,\xi \in \R^d.
\end{align} 
\end{proposition}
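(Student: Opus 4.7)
The plan is to solve \eqref{WKB_2_proposition_1_1} by the classical method of characteristics applied to the Hamilton flow of $p_h = k + h^2 V$, and then extend the resulting local phase to all of $\R^{2d}$ by a cut-off. First I would study the Hamilton flow $(y(s,x,\xi),\eta(s,x,\xi))$ generated by $p_h$ with initial data $(x,\xi)\in\Omega(R,J)$. Under Assumption \ref{assumption_A} with $\mu=0$, $\nu=1$, the forces $\partial_x k$ and $h^2\partial_x V$ are of size $\<y\>^{-1}$ and $h^2$ respectively; since $|y|\gtrsim R$ initially and $Rh\le 1$, each of them integrated over $|t|\le\delta R$ produces an impulse of size $O(\delta)$, so Gr\"onwall's lemma yields $\eta(t,x,\xi) = \xi + O(\delta)$ together with $y(t,x,\xi)/|y(t,x,\xi)|\simeq x/|x|$ and $|y(t,x,\xi)|\gtrsim R$. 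Hence the flow remains in a slightly enlarged region of $\Omega(R,J)$; these bounds and their higher-order analogues are the technical estimates deferred to Appendix \ref{appendix_A}.

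Next I would prove that $x\mapsto y(t,x,\xi)$ is a diffeomorphism onto its image for $|t|\le\delta R$ with $\partial_x y(t,x,\xi) = \Id + O(\delta)$: the variational equations for $\partial_x y$ and $\partial_x\eta$ have coefficients bounded in $L^1_t([-\delta R,\delta R])$ by a small multiple of $\delta$ (using $|\partial_x^2 p_h|\lesssim \<y\>^{-2} + h^2\<y\>^{-1}$), so Gr\"onwall's lemma closes the argument. Let $z(t,x,\xi)$ denote the resulting inverse map. The phase is then defined by the classical action
\begin{equation*}
\Psi_h(t,x,\xi) := z(t,x,\xi)\cdot\xi + \int_0^t \bigl[\eta\cdot\partial_\xi p_h - p_h\bigr]\bigl(y(s,z(t,x,\xi),\xi),\,\eta(s,z(t,x,\xi),\xi)\bigr)\,ds.
\end{equation*}
Since $z(0,x,\xi)=x$, the initial condition $\Psi_h(0,x,\xi)=x\cdot\xi$ holds; the standard Hamilton-Jacobi computation then yields the key identity $\partial_x\Psi_h(t,y(t,x_0,\xi),\xi)=\eta(t,x_0,\xi)$, from which \eqref{WKB_2_proposition_1_1} follows on $\Omega(R,J)$ for $|t|<\delta R$.

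The derivative estimates \eqref{WKB_2_proposition_1_2} and \eqref{WKB_2_proposition_1_4} then follow by differentiating the defining formula of $\Psi_h$ and inserting the higher-order flow estimates from the first step and the Jacobian bound from the second step; for \eqref{WKB_2_proposition_1_4} one uses in addition that $\partial_t\Psi_h|_{t=0}=-p_h(x,\xi)$ and $\Psi_h|_{t=0}=x\cdot\xi$, so that a Taylor expansion in $t$ accounts for the factor of $|t|$ in $\Psi_h - x\cdot\xi + tp_h$. Finally, the construction gives $\Psi_h$ only on $\Omega(R,J)$; to extend it to a smooth function on $\R^{2d}$ as required, I would pick a cut-off $\chi\in C^\infty(\R^{2d})$ equal to $1$ on $\Omega(R,J)$ and supported in a slightly enlarged region, replace $\Psi_h - x\cdot\xi$ by $\chi\cdot(\Psi_h - x\cdot\xi)$, and add back $x\cdot\xi$. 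The estimates \eqref{WKB_2_proposition_1_2}--\eqref{WKB_2_proposition_1_4} are preserved because $\chi$ is supported where the local estimates are already in force.

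The principal obstacle is the uniform-in-$(R,h)$ control of the Hamilton flow on the borderline time scale $|t|\le\delta R\le h^{-1}$, which is precisely the natural scale associated to the decay $\<x\>^{-1}$ of $\partial_x k$ and saturates the bound $|t|\cdot h^2\<x\>\lesssim \delta Rh\cdot Rh\le\delta$ for the potential impulse; the flow is therefore not a small perturbation of the free flow in any standard sense. Propagating the corresponding smallness through all higher-order derivatives $\dderiv{x}{\xi}{\alpha}{\beta}y$ and $\dderiv{x}{\xi}{\alpha}{\beta}\eta$ with correctly matched powers of $R$ requires a careful induction on $|\alpha+\beta|$, which is carried out in Appendix \ref{appendix_A}.
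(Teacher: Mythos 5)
Your proposal is correct and follows essentially the same route as the paper's Appendix \ref{appendix_A}: Gronwall bounds for the Hamilton flow of $p_h$ and its derivatives on the scale $|t|\le\delta R$, the diffeomorphism property of $x\mapsto X_h(t,x,\xi)$ with $\partial_x X_h=\Id+O(\delta)$, definition of $\Psi_h$ via the classical action, and a cut-off extension to all of $\R^{2d}$. One small point in your favor: your formula $\Psi_h(t,x,\xi)=z(t,x,\xi)\cdot\xi+\int_0^t L_h\,ds$ (with $z$ the inverse position flow) is the correct generating function, whereas the paper writes $x\cdot\xi$ in front of the integral, which does not reproduce $\partial_t\Psi_h|_{t=0}=-p_h(x,\xi)$ and appears to be a typographical slip.
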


\begin{proof}
We give the proof in Appendix \ref{appendix_A}. 
  \end{proof}
We next define the corresponding FIO. Let $0<R \le h^{-1}$, $J\Subset J_1\Subset (0,\infty)$ open intervals and $\Psi_h$ defined by the previous proposition with $R,J$ replaced by $R/4,J_1$, respectively. Suppose that $\{a_h(t,\cdot,\cdot)\}_{h \in (0,h_0],0 \le t \le \delta R}$ is bounded in $S(1,g)$ and supported in $\Omega(R,J)$. We then define the FIO for WKB parametrix $\F_{{\WKB}}(\Psi_h(t),a_h(t)):\S(\R^d)\to\S(\R^d)$ by
$$
\F_{{\WKB}}(\Psi_h(t),a_h(t))u(x)=\frac{1}{(2\pi h)^d}\int e^{i(\Psi_h(t,x,\xi)-y\cdot\xi)/h}a_h(t,x,\xi)u(y)dyd\xi.
$$

\begin{lemma}							
\label{WKB_lemma_1}
$\F_{{\WKB}}(\Psi_h(t),a(t))$ is bounded on $L^2(\R^d)$ uniformly with respect to $R$, $h$ and $t$:
$$
\sup_{h \in (0,h_0],0 \le t \le \delta R}\norm{\F_{{\WKB}}(\Psi_h(t),a(t))}_{\L(L^2(\R^d))} \le C.
$$
\end{lemma}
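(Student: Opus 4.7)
My plan is to apply the $TT^*$-argument followed by Schur's test. Set $T:=\F_{\WKB}(\Psi_h(t),a_h(t))$, so that $\|T\|_{\L(L^2)}^2=\|TT^*\|_{\L(L^2)}$. Writing out the composition and integrating out the plane-wave variable (which produces a Dirac mass forcing the two momentum variables to coincide), the kernel of $TT^*$ is
\[
K(x,x')=\frac{1}{(2\pi h)^d}\int e^{i[\Psi_h(t,x,\xi)-\Psi_h(t,x',\xi)]/h}a_h(t,x,\xi)\overline{a_h(t,x',\xi)}\,d\xi.
\]
It then suffices to establish $\sup_x\int|K(x,x')|\,dx'+\sup_{x'}\int|K(x,x')|\,dx\le C$ uniformly in the parameters.

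To linearize the phase I would write
\[
\Psi_h(t,x,\xi)-\Psi_h(t,x',\xi)=(x-x')\cdot\tilde\Phi(t,x,x',\xi),\qquad \tilde\Phi:=\int_0^1\partial_x\Psi_h(t,x'+\theta(x-x'),\xi)\,d\theta,
\]
and then change variables $\eta=\tilde\Phi(t,x,x',\xi)$ in the $\xi$-integral. Evaluating \eqref{WKB_2_proposition_1_2} with $|\alpha|=1$ gives $|\partial_\xi\tilde\Phi-I|\le C\delta$ and $|\partial_\xi^\beta\tilde\Phi|\le C_\beta\delta$ for $|\beta|\ge 2$, uniformly in $(t,x,x',h)$. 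Taking $\delta_0$ small enough, Hadamard's global inverse function theorem ensures that $\xi\mapsto\tilde\Phi(t,x,x',\xi)$ is a $C^\infty$-diffeomorphism of $\R^d$ whose inverse and Jacobian have uniformly bounded derivatives of every order. After this change of variable the kernel takes the oscillatory form
\[
K(x,x')=\frac{1}{(2\pi h)^d}\int e^{i(x-x')\cdot\eta/h}\,b_h(t,x,x',\eta)\,d\eta,
\]
where $b_h$ is supported in a fixed compact subset of $\R^d_\eta$ determined by $J_1$, with $\eta$-derivatives bounded uniformly in all parameters.

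Finally, repeated integration by parts based on the identity $(1-\Delta_\eta)^N e^{i(x-x')\cdot\eta/h}=(1+|x-x'|^2/h^2)^N e^{i(x-x')\cdot\eta/h}$ yields, for every $N$,
\[
|K(x,x')|\le C_N\, h^{-d}\bigl(1+|x-x'|/h\bigr)^{-2N},
\]
and choosing $2N>d$, the substitution $z=(x-x')/h$ supplies the Schur bounds $\int|K(x,x')|\,dx'\le C$ and $\int|K(x,x')|\,dx\le C$ uniformly in $(t,h,R)$, which closes the argument. The main obstacle is the \emph{global} character of the diffeomorphism $\xi\mapsto\tilde\Phi(t,x,x',\xi)$ on all of $\R^d_\xi$ (and not merely near the $\xi$-support of the amplitude), uniformly in $(t,x,x',h)$ over the whole range $|t|\le\delta R$, $R\in(0,h^{-1}]$; this reduces via Hadamard's theorem to the $O(\delta)$-smallness of $\partial_\xi\tilde\Phi-I$, which is furnished by \eqref{WKB_2_proposition_1_2} in the case $|\alpha|=|\beta|=1$.
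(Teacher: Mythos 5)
Your proof is correct and follows essentially the same route as the paper: the $TT^*$ reduction, linearization of the phase by the Kuranishi trick $\tilde\Phi=\int_0^1\partial_x\Psi_h$, and the change of variables in $\xi$ justified by \eqref{WKB_2_proposition_1_2} and the global inverse function theorem, reducing to a semiclassical PDO. The only difference is cosmetic: where the paper cites Calder\'on--Vaillancourt for the resulting operator, you unwind it by hand via integration by parts and Schur's test, which is in any case the standard proof of that theorem for a compactly $\xi$-supported amplitude.
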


\begin{proof}
For $|t| \le \delta R$, we define the map $\wtilde{\Xi}(t,x,\xi,y)$ on $\R^{3d}$ by
$$\wtilde{\Xi}(t,x,y,\xi)=\int_0^1 (\partial_x \Psi_h)(t,y+\lambda(x-y),\xi)d\lambda.$$
By \eqref{WKB_2_proposition_1_2}, $\wtilde{\Xi}(t,x,y,\xi)$ satifies
$$
\abs{\partial_x^\alpha\dderiv{y}{\xi}{\beta}{\gamma}(\wtilde{\Xi}(t,x,y,\xi)-\xi)}
\le C_{\alpha\beta\gamma}\delta R^{-|\alpha+\beta|},\quad |t| \le \delta R,\ x,y \in \R^d,
$$
and the map $\xi \mapsto \wtilde{\Xi}(t,x,\xi,y)$ hence is a diffeomorphism from $\R^d$ onto itself for all $|t| \le \delta R$ and $x,y \in \R^d$, provided that $\delta>0$ is small enough. 
Let $ \xi \mapsto [\wtilde{\Xi}]^{-1}(t,x,y,\xi)$ be the corresponding inverse. $[\wtilde{\Xi}]^{-1}$ satisfies the same estimate as that for $\wtilde{\Xi}$:
$$
\abs{\dderiv{x}{y}{\alpha}{\beta}\partial_\xi^\gamma([\wtilde{\Xi}]^{-1}(t,x,y,\xi)-\xi)}
\le C_{\alpha\beta\gamma}\delta R^{-|\alpha+\beta|}
\quad\text{on}\quad [-\delta R,\delta R]\times \R^{3d}. 
$$
Using the change of variables $\xi \mapsto [\wtilde{\Xi}]^{-1}$, $\F_{{\WKB}}(\Psi_h(t),a(t))\F_{{\WKB}}(\Psi_h(t),a(t))^*$ can be regarded as a semi-classical PDO with a smooth and bounded amplitude
$$
a_h(t,x,[\wtilde{\Xi}]^{-1}(t,x,y,\xi))\overline{a_h(t,y,[\wtilde{\Xi}]^{-1}(t,x,y,\xi))}|\det \partial_\xi[\wtilde{\Xi}]^{-1}(t,x,y,\xi)|. 
$$
Therefore, the $L^2$-boundedness follows from the Calder\'on-Vaillancourt theorem. 
  \end{proof}

We now state the main result in this section.

\begin{theorem}						
\label{A_theorem_1}
Let $J \Subset J_0 \Subset J_1 \Subset (0,\infty)$ be open intervals. Then there exist $\delta_0,h_0>0$ such that, for all $h\in (0,h_0]$, $0 <R \le h^{-1}$, $0<\delta\le \delta_0$ and all symbol
$$
a \in S(1,g)
\quad\text{with}\quad
\supp a \in \Omega(R,J),
$$ 
and all $N \ge 0$, we can find a semi-classical symbol 
$$
b_h(t,x,\xi)=\sum_{j=0}^Nh^jb_{h,j}(t,x,\xi)
$$ 
with $b_{h,j}(t,\cdot,\cdot)$ bounded in $S(1,g)$ and $\supp b_{h,j}(t,\cdot,\cdot) \subset \Omega(R/2,J_0)$ uniformly with respect to $h \in (0,h_0]$ and $|t|\le \delta R$ such that $e^{-ithH}a(x,hD_x)$ can be brought to the form
\begin{align*}
e^{-ithH}a(x,hD_x)=\F_{{\WKB}}(\Psi_h(t),b_h(t))+Q_{{\WKB}}(t,h,N),
\end{align*}
where $\F_{{\WKB}}(\Psi_h(t),b_h(t))$ is the Fourier intehgral operator with the phase function $\Psi_h(t,x,\xi)$, defined in Proposition \ref{WKB_2_proposition_1} with $R,J$ replaced by $R/4,J_1$ respectively, and its distribution kernel satisfies the dispersive estimates:
\begin{align}
\label{dispersive_2}
|K_{{\WKB}}(t,h,x,y)| \le C |th|^{-d/2},\quad h \in (0,h_0],\ 0<|t| \le \delta R,\ x,\xi \in \R^d.
\end{align}
Moreover the remainder $Q_{{\WKB}}(t,h,N)$ satisfies
\begin{align*}
\norm{Q_{{\WKB}}(t,h,N)}_{\L(L^2(\R^d))}
\le C_N h^{N}|t|,\quad h \in (0,h_0],\ |t| \le \delta R.
\end{align*}
Here the constants $C,C_N>0$ can be taken uniformly with respect to $h$, $t$ and $R$. 
\end{theorem}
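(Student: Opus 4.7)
The plan is to construct the parametrix via a standard semi-classical WKB ansatz: seek an approximate solution to $(i\partial_t - hH)u_h = 0$, $u_h|_{t=0} = a(x,hD)f$, of the form $w_h(t,x) = \F_{{\WKB}}(\Psi_h(t),b_h(t))f(x)$, where $\Psi_h$ is the phase supplied by Proposition \ref{WKB_2_proposition_1} (applied with parameters $R/4,J_1$) and $b_h = \sum_{j=0}^N h^j b_{h,j}$ is to be determined. A direct computation shows that the $O(h^{-1})$ term in $(i\partial_t - hH)(e^{i\Psi_h/h}b_h)$ equals $-h^{-1}(\partial_t\Psi_h + k(x,\partial_x\Psi_h))b_h$; by the Hamilton--Jacobi equation \eqref{WKB_2_proposition_1_1} this is $h Vb_h$, which exactly cancels the contribution of $-hV$ from the potential. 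What remains has the form
$$
(i\partial_t - hH)(e^{i\Psi_h/h}b_h) = e^{i\Psi_h/h}\bigl(i\mathcal{T}b_h - hH_0 b_h\bigr),
$$
where $\mathcal{T}b = \partial_t b + \sum_{jk}a^{jk}(x)(\partial_j\Psi_h)\partial_k b + \frac12\sum_{jk}\partial_j(a^{jk}\partial_k\Psi_h)b$ is the transport operator along the bicharacteristic flow dictated by the WKB phase.

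Matching powers of $h$ produces the hierarchy $\mathcal{T}b_{h,0} = 0$ with $b_{h,0}|_{t=0} = a$, and $\mathcal{T}b_{h,j} = -iH_0 b_{h,j-1}$ with $b_{h,j}|_{t=0} = 0$ for $j = 1,\dots,N$. I would solve these by the method of characteristics, integrating $\dot x(s) = \sum_k a^{jk}(x)\partial_k\Psi_h(s,x,\xi)$ with $\xi$ as a parameter. Estimate \eqref{WKB_2_proposition_1_2} gives $\partial_x\Psi_h = \xi + O(\delta)$ on $|t|\le\delta R$, so this flow is a small perturbation of free translation; hence for $\delta$ small enough it maps $\Omega(R,J)$ into $\Omega(R/2,J_0)$, yielding the claimed support property. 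The symbol bounds $b_{h,j}(t)\in S(1,g)$, uniform in $h$ and $R$, then follow by induction on $j$: one differentiates both the characteristic ODE and the transport relation and uses that each $x$-derivative of $\Psi_h$ of order $\ge 2$ comes with a gain $\lesssim R^{-1}\lesssim \<x\>^{-1}$ on the support, again by \eqref{WKB_2_proposition_1_2}.

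With this choice of $b_h$ the residual telescopes to $(i\partial_t - hH)w_h = -h^{N+1}e^{i\Psi_h/h}H_0 b_{h,N}$; since $H_0 b_{h,N}\in S(\<x\>^{-2},g)\subset S(1,g)$ uniformly, this is itself a WKB-type FIO with amplitude bounded in $S(1,g)$, of $L^2$-operator norm $O(h^{N+1})$ by Lemma \ref{WKB_lemma_1}. Because $\Psi_h(0,x,\xi) = x\cdot\xi$ and $b_{h,0}(0) = a$, one has $w_h(0) = a(x,hD)f$, so Duhamel's formula combined with the unitarity of $e^{-ishH}$ yields the stated bound $\norm{Q_{{\WKB}}(t,h,N)}_{\L(L^2(\R^d))}\le C_N h^N|t|$. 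For the dispersive estimate \eqref{dispersive_2} I would apply semi-classical stationary phase to
$$
K_{{\WKB}}(t,h,x,y) = (2\pi h)^{-d}\int e^{i(\Psi_h(t,x,\xi)-y\cdot\xi)/h}\,b_h(t,x,\xi)\,d\xi.
$$
By \eqref{WKB_2_proposition_1_4}, $\partial_\xi^2\Psi_h(t,x,\xi) = -t(a^{jk}(x)) + O(\delta|t|)$, which together with the uniform ellipticity of $(a^{jk})$ makes the $\xi$-Hessian of $\Psi_h(t,x,\xi)-y\cdot\xi$ nondegenerate of size $|t|$ for $\delta$ small. Standard stationary phase near the unique critical point then gives a factor $(h/|t|)^{d/2}$, which combined with the $h^{-d}$ prefactor yields $|K_{{\WKB}}|\lesssim (h|t|)^{-d/2}$, while the complementary region is handled by non-stationary phase with an $O(h^\infty)$ gain.

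The principal technical obstacle is keeping all constants uniform in $R$ up to the critical scale $R\sim h^{-1}$: the transport time $\delta R$ may be of order $\delta h^{-1}$, and a naive iteration of derivative estimates along characteristics would generate uncontrolled powers of $R$ at each step. It is precisely the scale-sensitive estimates \eqref{WKB_2_proposition_1_2}--\eqref{WKB_2_proposition_1_4}, proved in Appendix \ref{appendix_A}, that let the induction close with constants independent of $R$.
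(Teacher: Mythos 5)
Your proposal is correct and follows essentially the same route as the paper: the WKB ansatz with the phase from Proposition \ref{WKB_2_proposition_1}, the transport hierarchy solved by characteristics along the flow $\dot x=(\partial_\xi k)(x,\partial_x\Psi_h)$ with uniformity in $R$ supplied by \eqref{WKB_2_proposition_1_2}--\eqref{WKB_2_proposition_1_4}, Duhamel plus the $L^2$-boundedness of the FIO (Lemma \ref{WKB_lemma_1}) for the remainder, and stationary phase in $\xi$ using $\partial_\xi^2\Psi_h=-t(a^{jk}(x))+O(\delta|t|)$ for the dispersive bound. The only cosmetic gap is the regime $0<|t|\le h$, where stationary phase has no large parameter and one instead uses the trivial bound $|K_{\WKB}|\le Ch^{-d}\le C|th|^{-d/2}$ coming from the compact $\xi$-support.
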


\begin{remark}
The essential point of Theorem \ref{A_theorem_1} is to construct the parametrix on the time interval $|t| \le \delta R$. When $|t|>0$ is small and independent of $R$, such a parametrix construction is basically well known (cf. \cite{Robert2}). 
\end{remark}

\begin{bew}{of Theorem \ref{A_theorem_1}}
We consider the case when $t \ge0$ and the proof for $t<0$ is similar. \\
\textbf{Construction of the amplitude.} The Duhamel formula yields
\begin{align*}
&e^{-ithH}\F_{{\WKB}}(\Psi_h(0),b_h(0))\\
&=\F_{{\WKB}}(\Psi_h(t),b_h(t))+\frac ih \int_0^t  e^{-i(t-s)hH}(hD_s+h^2H)\F_{{\WKB}}(\Psi_h(s),b_h(s))ds.
\end{align*}
Therefore, it suffices to show that there exist $b_{h,j}$ with $b_{h,0}|_{t=0}=a$ and $b_{h,j}|_{t=0}=0$ for $j \ge 1$ such that
\begin{align}
\label{WKB_theorem_proof_0}
\norm{(hD_s+h^2H)\F_{{\WKB}}(\Psi_h(s),b_h(s))}_{\L(L^2)} 
\le C_N h^{N+1},\quad 
0 \le s \le \delta R. 
\end{align}
Let $k+k_1$ be the full symbol of $H_0$: $H_0=k(x,D)+k_1(x,D)$, and define a smooth vector field $\X_h(t)$ and a function $\Y_h(t)$ by
$$
\X_h(t,x,\xi):=(\partial_\xi k)(x,\partial_x \Psi_h(t,x,\xi)),\quad \Y_h(t,x,\xi):=[(k+k_1)(x,\partial_x)\Psi_h](t,x,\xi).
$$
Symbols $\{b_{h,j}\}$ can be constructed in terms of the method of characteristics as follows. For all $0 \le s,t \le \delta R$, we consider the flow $z_h(t,s,x,\xi)$ generated by $\X_h(t)$, that is the solution to the following ODE:
\begin{align*}
\partial_t z_h(t,s,x,\xi)=\X_h(z_h(t,s,x,\xi),\xi);\quad z_h(s,s)=x.
\end{align*}
Choose $R',R''$ and two intervals $J_0',J_0''$ so that 
$$R/2>R'>R''>R/4,\quad J_0\Subset J_0'\Subset J_0''\Subset (0,\infty).$$
\eqref{WKB_2_proposition_1_4} and the same argument as that in the proof of Lemmas \ref{WKB_1_lemma_1} and \ref{WKB_1_lemma_3} imply that there exists $\delta_0,h_0>0$ small enough such that, for all $0<\delta\le \delta_0$, $h \in (0,h_0]$ $0<R\le h^{-1}$ and $0 \le s,t \le \delta R$, $z_h(t,s)$ is well defined on $\Omega(R'',J_0'')$ and satisfies
\begin{equation}
\begin{aligned}
\label{WKB_theorem_proof_1}
|\dderiv{x}{\xi}{\alpha}{\beta}(z_h(t,s,x,\xi)-x)| 
\le C_{\alpha\beta}\delta R^{1-|\alpha|}. 
\end{aligned}
\end{equation}
In particular, $(z_h(t,s,x,\xi),\xi) \in \Omega(R',J')$ for $0 \le s,t \le \delta R$ if $\delta>0$, depending only on $J''$, is small enough. We now define $\{b_{h,j}(t,x,\xi)\}_{0 \le j \le N}$ inductively by
\begin{equation}
\begin{aligned}
\nonumber
&b_{h,0}(t,x,\xi)=a(z_h(0,t),\xi)\exp\left(\int_0^t\Y_h(s,z_h(s,t,x,\xi),\xi)ds\right),\\
&b_{h,j}(t,x,\xi)\\
&=-\int_0^t (iH_0 b_{h,j-1})(s,z_h(s,t),\xi) \exp\left(\int_u^t\Y_h(u,z_h(u,t,x,\xi),\xi)du\right)ds. 
\end{aligned}
\end{equation}
Since $\supp a \in \Omega(R,J)$ and $z_h(t,s,\Omega(R,J)) \subset \{x;|x|>R/2\}$ for all $0 \le s,t \le \delta R$, $b_{h,j}(t)$ are supported in $\Omega(R/2,J_0)$. Thus, if we extend $b_{h,j}$ on $\R^{2d}$ so that $$b_{h,j}(t,x,\xi)=0,\quad(x,\xi)\notin \Omega(R/2,J_0),$$ then $b_{h,j}$ is still smooth in $(x,\xi)$. 
By \eqref{WKB_2_proposition_1_4} and \eqref{WKB_theorem_proof_1}, we learn
$$|\dderiv{x}{\xi}{\alpha}{\beta}\Y_h(s,z_h(s,t,x,\xi),\xi)| \le C\delta R^{-1-|\alpha|},\quad 0\le s,t\le\delta R.$$
$\{b_{h,j}(t,\cdot,\cdot);h \in (0,h_0],\ 0<R\le h^{-1},\ t \in [0,\delta R],\ 0 \le j \le N\}$ thus is a bounded set in $S(1,g)$ and $\supp b_{h,j}(t,\cdot,\cdot) \subset \Omega(R/2,J_0)$ uniformly with respect to $h \in (0,h_0]$ and $0 \le t \le \delta R$. 

A standard Hamilton-Jacobi theory shows that $b_{h,j}(t)$ satisfy the following transport equations:
\begin{equation}
\left\{
\begin{aligned}
&\partial_t b_{h,0}(t)+\X_h(t)b_{h,0}(t)+\Y_h(t)b_{h,0}(t)=0,\\
&\partial_t b_{h,j}(t)+\X_h(t)b_{h,j}(t)+\Y_h(t)b_{h,j}(t)=-iH_0b_{h,j-1}(t),\ j \ge 1,
\end{aligned}
\right.
\end{equation}
with the initial condition $b_{h,0}(0)=a$, $b_{h,j}(0)=0$, $j=1,2,...,N$. 
A direct computation then yields 
$$
e^{-i\Psi_h(s,x,\xi)/h}(hD_s+h^2H)\left(e^{i\Psi_h(s,x,\xi)/h}\sum_{j=0}^N h^j b_{h,j}\right)
=O(h^{N+1})\ \text{in}\ S(1,g)
$$
which, combined with Lemma \ref{WKB_lemma_1}, implies \eqref{WKB_theorem_proof_0}.\\
\textbf{Dispersive estimates.} The distribution kernel of $\F_{{\WKB}}(\Psi_h(t),b_h(t))$ is given by
$$K_{{\WKB}}(t,h,x,y)=\frac{1}{(2\pi h)^d}\int e^{\frac{i}{h}(\Psi_h(t,x,\xi)-y\cdot\xi)}b_h(t,x,\xi)d\xi. $$
Since $b_h(t,x,\xi)$ has a compact support with respect to $\xi$, 
$$|K_{{\WKB}}(t,h,x,y)| \le Ch^{-d}\le C|th|^{-d/2}\quad \text{for}\quad 0 < t \le h.$$
We hence assume $h<t$ without loss of generality. Choose $\chi \in S(1,g)$ so that $0 \le \chi \le 1$, $\chi \equiv 1$ on $\Omega(R/2,J_0)$ and $\supp \chi \subset \Omega(R/4,J_1)$, and set 
$$
\psi_h(t,x,y,\xi)
=\frac{(x-y)}{t}\cdot\xi-p_h(x,\xi)+\chi(x,\xi)\left(\frac{\Psi_h(t,x,\xi)-x\cdot\xi}{t}+p_h(x,\xi)\right).
$$
By the definition, we obtain
\begin{align*}
\psi_h(t,x,y,\xi)&=\frac{\Psi_h(t,x,\xi)-y\cdot\xi}{t},\quad t \in [h,\delta R],\ (x,\xi) \in \Omega(R/2,J_1),\ y \in \R^d, 
\end{align*}
and \eqref{WKB_2_proposition_1_4} implies 
$$\abs{\dderiv{x}{\xi}{\alpha}{\beta}\psi_h(t,x,y,\xi)}\le C_{\alpha\beta}\quad\text{on}\quad [0,\delta R]\times \R^{3d},\quad |\alpha+\beta|\ge 2.$$
Moreover, $\partial_\xi^2\psi_h(t,x,y,\xi)$ can be brought to the form
$$\partial_\xi^2\psi_h(t,x,y,\xi)=-(a^{jk}(x))_{j,k}+Q_h(t,x,\xi),$$
where the error term $Q_h(t,x,\xi)$ is a $d\times d$-matrix satisfying
$$
\abs{\dderiv{x}{\xi}{\alpha}{\beta}Q_h(t,x,\xi)} \le C_{\alpha\beta}\delta h^{|\alpha|}\quad\text{on}\quad [0,\delta R]\times \R^{2d}.
$$
Since $(a^{jk}(x))$ is uniformly elliptic,  the stationary phase theorem implies that
$$|K_{{\WKB}}(t,h,x,y)| \le Ch^{-d}|t/h|^{-d/2}=C|th|^{-d/2},$$
provided that $\delta>0$ is small enough. We complete the proof.
  \end{bew}

\section{Proof of Theorem \ref{theorem_1} (i)}
\label{Proof}
In this section we complete the proof of Theorem \ref{theorem_1} (i). Let $\chi_0 \in C^\infty_0(\R^d)$ with $\chi_0\equiv 1$ on $\{|x|<R_0\}$ and $\psi \in C_0^\infty((0,\infty))$. A partition unity argument and Lemma \ref{2_lemma_1} show that there exist $a^\pm \in S(1,g)$ with $\supp a^\pm \subset \Gamma^\pm(R_0,J,1/2)$ such that $(1-\chi_0)\psi(h^2H_0)$ is approximated by $a^\pm(x,hD)$:
$$
(1-\chi_0)\psi(h^2H_0)=a^+(x,hD)^*+a^-(x,hD)^*+Q_0(h), 
$$
where $J\Subset (0,\infty)$ is an open interval satisfying $\pi_\xi(\supp \varphi\circ k) \Subset J$ and $Q_0(h)$ satisfies 
$$
\sup_{h\in (0,1]}\norm{Q_0(h)}_{\L(L^2(\R^d),L^q(\R^d))} \le C_q,\quad q\ge 2. 
$$
Let $b\in C^\infty_0(\R^d;\R)$ be a cut-off function such that $b\equiv 1$ on a neighborhood of $J$. By the asymptotic formula \eqref{symbolic_calculus}, we can write
$$
a^\pm(x,hD)^*=b(hD)a^\pm(x,hD)^*+Q_1(h)
$$
where $Q_1(h)$ satisfies the same $\L(L^2,L^q)$-estimate as that of $Q_0(h)$. Therefore, 
\begin{align}
\label{proof_0}
\norm{(Q_0(h)+Q_1(h))e^{-itH}u_0}_{L^p([-\delta,\delta];L^q(\R^d))} \le C\norm{u_0}_{L^2(\R^d)}, \quad h \in (0,1],
\end{align}
for any $p,q \ge 2$.

Next, we shall prove the following dispersive estimate for the main terms:
\begin{equation}
\begin{aligned}
\label{proof_1}
&\norm{b(hD)a^\pm(x,hD)^*e^{-i(t-s)H}a^\pm(x,hD)b(hD)}_{\L(L^1(\R^d),L^\infty(\R^d))}\\
&\le C|t-s|^{-d/2}
\end{aligned}
\end{equation}
for $0<|t-s|\le \delta$. 
We first consider the outgoing case. Let us fix $N >1$ so large that $N \ge 2d+1$. 
After rescaling $t-s \mapsto (t-s)h$ and choosing $R_0>1$ large enough, 
we apply Theorem \ref{3_theorem_1} with $R=R_0$, Lemma \ref{3_lemma_1} and Theorem \ref{A_theorem_1} with $R=h^{-1}$ to $e^{-i(t-s)hH}a^+(x,hD)$. Then, we can write 
\begin{align*}
&e^{-i(t-s)hH}a^+(x,hD)\\
&=\F_{{\IK}}(S_h^+,b^+_h)e^{i(t-s)h\Delta/2}\F_{{\IK}}(S^+_h,c^+_h)^*
+\F_{{\WKB}}(\Psi_h(t-s),b_h(t-s))\\
&\ \ \ +Q_2^+(t-s,h), 
\end{align*}
where the distribution kernels of main terms satisfy dispersive estimates
\begin{align}
\label{proof_1_2}
|K^+_{{\IK}}(t-s,h,x,y)|+|K_{{\WKB}}(t-s,h,x,y)| \le C|(t-s)h|^{-d/2},
\end{align}
uniformly with respect to $h\in (0,h_0]$, $0 < t-s\ \le \delta h^{-1}$ and $x,y \in \R^d$. Let $A(h,x,y)$ and $B(h,x,y)$ be the distribution kernels of $a(x,hD)^*$ and $b(hD)$, respectively. 
They clearly satisfy
$$\sup_x\int (|A(h,x,y)|+|B(h,x,y)|)dy+\sup_y\int (|A(h,x,y)|+|B(h,x,y)|)dx \le C$$
uniformly in $h \in (0,1]$. 
By using this estimate and \eqref{proof_1_2}, we see that the distribution kernel of $b(hD)a^+(x,hD)^*\left(e^{-i(t-s)hH}a^+(x,hD)-Q_2^+(t-s,h)\right)b(hD)$ satisfies the same dispersive estimates as \eqref{proof_1_2} for $0<t-s \le \delta h^{-1}$. 
On the other hand, $Q_2^+(t-s,h)$ satisfy
$$
\norm{Q_2^+(t-s,h)}_{\L(L^2(\R^d))} \le C_Nh^N,\quad h \in (0,h_0],\ 0\le t-s \le \delta h^{-1}. 
$$
We here recall that $a^+(x,hD)^*$ is uniformly bounded on $L^2(\R^d)$ in $h\in (0,1]$ and $b(hD)$ satisfies 
\begin{align*}
&\norm{b(hD)}_{\L(H^{-s}(R^d),H^s(\R^d)} \\
&\le \norm{\<D\>^s\<hD\>^{-s}}_{\L(L^2(R^d)}\norm{\<hD\>^{s}b(hD)\<hD\>^{s}}_{\L(L^2(R^d)}\norm{\<hD\>^{-s}\<D\>^s}_{\L(L^2(R^d)}\\
&\le C_s h^{-2s}. 
\end{align*}
$b(hD)a^+(x,hD)^*Q_2^+(t-s,h)b(hD)$ hence is a bounded operator in $\L(H^{-s},H^s)$ for some $s>d/2$.  Its distribution kernel $\wtilde{Q}_2^+(t-s,h,x,y)$ thus is uniformly bounded on $\R^{2d}$ with respect to $h\in (0,h_0]$ and $0 \le t-s \le \delta h^{-1}$. 
Therefore,  
$$|\wtilde{Q}_2^+(t-s,h,x,y)| \lesssim 1 \lesssim |(t-s)h|^{-d/2},\quad h\in (0,h_0],\ 0 < t-s \le \delta h^{-1}.$$
The corresponding estimates for the incoming case also hold for $0 \le -(t-s) \le \delta h^{-1}$. Therefore, $b(hD)a^\pm(x,hD)^*e^{-i(t-s)hH}a^\pm(x,hD)b(hD)$ have distribution kernels $K^\pm(t-s,h,x,y)$ satisfying
\begin{align}
\label{proof_2}
\abs{K^\pm(t-s,h,x,y)} \le C|(t-s)h|^{-d/2}
\end{align}
uniformly with respect to $h \in (0,h_0],\ 0 \le \pm(t-s)\le \delta h^{-1}$ and $x,y\in \R^d$, respectively. 

We here use a simple trick due to Bouclet-Tzvetkov \cite[Lemma 4.3.]{Bouclet_Tzvetkov_1}. If we set 
$U^\pm(t,h)=b(hD)a^\pm(x,hD)^*e^{-ithH}a^\pm(x,hD)b(hD)$, 
then $$U^\pm(s-t,h)=U^\pm(t-s,h)^*,$$ and hence $K^\pm(s-t,h,x,y)=\overline{K^\pm(t-s,h,y,x)}$. 
Therefore, the estimates \eqref{proof_2} also hold for $0<\mp(t-s)\le \delta h^{-1}$ and $x,y\in \R^d$. Rescaling $(t-s)h \mapsto t-s$, we obtain the estimate \eqref{proof_1}. 

Finally, since the $\L(L^2)$-boundedness of $a^\pm(x,hD)^*e^{-itH}$ is obvious, \eqref{proof_0}, \eqref{proof_1} and the Keel-Tao theorem \cite{Keel_Tao} imply the desired semi-classical Strichartz estimates:
$$
\sup_{h \in (0,h_0]}\norm{(1-\chi_0)\psi_0(h^2H_0)e^{-itH}u_0}_{L^p([-\delta,\delta];L^q(\R^d))} \le C\norm{u_0}_{L^2(\R^d)}.
$$
By the virtue of Proposition \ref{2_proposition_1}, we complete the proof of Theorem \ref{theorem_1} (i).


\section{Proof of Theorem \ref{theorem_1} (ii)}
\label{compact}
In this section we prove Theorem \ref{theorem_1} (ii). Suppose that $H$ satisfies Assumption \ref{assumption_A} with $\mu=\nu=0$. We first recall the local smoothing effects for Schr\"odinger operators with at most quadratic potentials proved by Doi \cite{Doi}. For  any $s \in \R$, we set 
$
\B^s:=\{ f \in L^2(\R^d) ; \<x\>^s f \in L^2(\R^d), \<D\>^s f \in L^2(\R^d)\}$, 
and define a symbol $e_s$ by
$$
e_s(x,\xi):=(k(x,\xi)+|x|^2+L(s))^{s/2}\in S((1+|x|+|\xi|)^s,g).
$$ We denote by $E_s$ its Weyl quantization:
$$E_sf(x)=\frac{1}{2\pi}\int e^{i(x-y)\cdot\xi}e_s\left(\frac{x+y}{2},\xi\right)f(y)dyd\xi.$$
Here $L(s)>1$ is a large constant depending on $s$. Then, for any $s \in \R$, there exists $L(s)>0$ such that $E_s$ is a homeomorphism from $\B^{r+s}$ to $\B^r$ for all $r \in \R$, and $(E_s)^{-1}$ is still a Weyl quantization of a symbol in $S((1+|x|+|\xi|)^{-s},g)$. 

\begin{lemma}[The local smoothing effects \cite{Doi}]
\label{compact_1}
Suppose that the kinetic energy $k(x,\xi)$ satisfies the non-trapping condition \eqref{non-trapping}. Then, for any $T>0$ and $\sigma>0$, there exists $C_T>0$ such that
\begin{align}
\label{smoothing effect}
\norm{\<x\>^{-1/2-\sigma}E_{1/2}u}_{{L^2([-T,T];L^2(\R^d))}} \le C_T \norm{u_0}_{L^2}, 
\end{align}
where $u=e^{-itH}u_0$.  
\end{lemma}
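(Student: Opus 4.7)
The plan is to invoke Doi's local smoothing theorem directly, and for completeness sketch its proof via the positive commutator (escape function) method. First I would verify that Doi's framework applies in the present setting: $H_0$ is an elliptic second-order operator with principal symbol $k(x,\xi)$ whose coefficients $a^{jk}-\delta_{jk}$ have globally bounded derivatives (Assumption A with $\mu=0$); the potential $V$ grows at most quadratically with $\partial_x^\alpha V$ bounded for $|\alpha|\ge 2$ (Assumption A with $\nu=0$); and the bicharacteristic flow of $k$ is non-trapping. These are precisely the assumptions used in \cite{Doi}.

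Next, I would recall the multiplier argument. Using the non-trapping hypothesis, one constructs an escape symbol $q \in S(\log\<x\>,g)$ (bounded in $\xi$ on energy shells, with $\partial_\xi q$ gaining an $\<x\>^{-1}$ factor) such that
\begin{align*}
\{k,q\}(x,\xi) \;\ge\; c\,\<x\>^{-1-2\sigma}\bigl(k(x,\xi)+|x|^2+L\bigr) - r(x,\xi),
\end{align*}
where $r$ is compactly supported. Such a $q$ is obtained by averaging the weight $\<x\>^{-1-2\sigma}(k+|x|^2+L)$ along the bicharacteristics of $k$; convergence and positivity of the resulting integral rely on completeness of $H_k$ and the non-trapping condition. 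Setting $Q=\Op^w(q)$ and $u(t)=e^{-itH}u_0$, the identity
\begin{align*}
\frac{d}{dt}\<Qu(t),u(t)\> \;=\; \<i[H,Q]u(t),u(t)\>,
\end{align*}
combined with the Weyl calculus (the symbol of $i[H,Q]$ equals $\{k+V,q\}$ modulo lower-order terms) and the sharp G\aa rding inequality, yields
\begin{align*}
\<i[H,Q]u,u\> \;\ge\; c\,\bignorm{\<x\>^{-1/2-\sigma}E_{1/2}u}_{L^2}^2 - C\norm{u}_{L^2}^2.
\end{align*}
Integrating in $t\in[-T,T]$ and using the $L^2$-boundedness of $Q$ together with the unitarity of $e^{-itH}$ gives the desired estimate.

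The main obstacle is the control of $\{V,q\}=\partial_x V\cdot\partial_\xi q$ in the commutator: since $\partial_x V$ may grow linearly, this Poisson bracket is a priori of the same size as the main positive term $\{k,q\}$. The resolution is to build $q$ as a function of the homogeneous ratio $k(x,\xi)/(|x|^2+L)$ (or an analogous scaling-adapted combination), so that $\partial_\xi q$ acquires an extra $\<x\>^{-1}$ decay that compensates the growth of $\partial_x V$; this is precisely the technical core of Doi's construction that transplants to the current long-range variable-coefficient framework without further modification.
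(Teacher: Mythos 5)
The paper states this lemma by citing Doi \cite{Doi} and gives no proof, so your proposal to invoke Doi's theorem directly matches the paper exactly. Your accompanying sketch of the positive-commutator argument is a reasonable outline of Doi's method, and you have correctly identified the delicate point that the escape symbol must be designed so that $\partial_\xi q$ gains decay compensating the linear growth of $\partial_x V$ when $V$ is quadratic.
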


\begin{remark}Let $\chi \in C^\infty_0(\R^d)$. \eqref{smoothing effect} implies a usual local smoothing effect:
\begin{align}
\label{local smoothing effect}
\norm{\<D\>^{1/2}\chi u}_{L^2([-T,T];L^2(\R^d))} \le C_T\norm{u_0}_{L^2(\R^d)}.
\end{align}
 Indeed, let $\chi_1\in C^\infty_0(\R^d)$ be such that $\chi_1 \equiv 1$ on $\supp \chi$. We split $\<D\>^{1/2}\chi$ as follows:
\begin{align*}
\<D\>^{1/2}\chi
&=\chi_1\<D\>^{1/2}\chi+[\<D\>^{1/2},\chi_1]\chi,\\
\chi_1\<D\>^{1/2}\chi&=\chi_1\<D\>^{1/2}(E_{1/2})^{-1}E_{1/2}\chi\\
&=\chi_1\<D\>^{1/2}(E_{1/2})^{-1}\chi_1E_{1/2}\chi+\chi_1\<D\>^{1/2}(E_{1/2})^{-1}[E_{1/2},\chi_1]\chi. 
\end{align*}
By a standard symbolic calculus, $[\<D\>^{1/2},\chi_1]\chi$, $\chi_1\<D\>^{1/2}(E_{1/2})^{-1}$ and $[E_{1/2},\chi_1]\chi$ are bounded on $L^2(\R^d)$ since $\chi_1$ has a compact support. Therefore, Lemma \ref{compact_1} implies
\begin{align*}
\norm{\<D\>^{1/2}\chi u}_{{L^2([-T,T];L^2(\R^d))}}
&\le C\norm{\chi_1 E_{1/2}\chi u}_{{L^2([-T,T];L^2(\R^d))}}+ C_T\norm{u}_{L^2(\R^d)}\\
&\le C_T \norm{u_0}_{L^2(\R^d)}.
\end{align*}
\end{remark}

\begin{bew}{of Theorem \ref{theorem_1} (ii)} We consider the case when $0 \le t \le T$ only, and the proof for the negative time is similar. We mimic the argument in \cite[Section II.2]{Robbiano_Zuily}. A direct computation yields 
\begin{align*}
(i\partial_t +\Delta)\chi u
&=\Delta \chi u + \chi H u\\
&=\chi_1 (H+\Delta) \chi_1 \chi u + (\chi_1[\chi,H]+[\Delta,\chi_1]\chi)u.
\end{align*}
We define a self-adjoint operator by $\widetilde{H}:=-\Delta+\chi_1 (H+\Delta) \chi_1$, and set $$\wtilde{U}(t):=e^{-it\widetilde{H}},\ F:=(\chi_1[\chi,H]+[\Delta,\chi_1]\chi)u.$$
We here note that if $H_0$ satisfies the non-trapping condition then so does the principal part of $\widetilde{H}$. By the Duhamel formula, we can write
$$\chi u = \wtilde{U}(t)\chi u_0 +\int^t_0 \wtilde{U}(t-s)F(s)ds.$$
Since $\chi_1 (H+\Delta) \chi_1$ is a compactly supported smooth perturbation, it was proved by Staffilani-Tataru \cite{Staffilani_Tataru} that $\wtilde{U}(t)$ is bounded from $L^2(\R^d)$ to $L^2([0,T];H^{1/2}_{loc}(\R^d))$, and that its adjoint
$$\wtilde{U}^*f=\int_0^T U(-s)f(s,\cdot)ds$$
is bounded from $L^2([0,T];H^{-1/2}_{loc}(\R^d))$ to $L^2(\R^d)$. Moreover, $\wtilde{U}(t)$ satisfies Strichartz estimates (for any admissible pair $(p,q)$):
\begin{align*}
\norm{\wtilde{U}(t)v}_{L^p([-T,T];L^q(\R^d))} \le C_T \norm{v}_{L^2}, 
\end{align*}
Therefore, we have 
\begin{align*}
\bigg|\bigg|\int^T_0 \wtilde{U}(t-s)F(s)ds\bigg|\bigg|_{L^p([-T,T];L^q(\R^d))} 
&\le C_T \norm{U^*F}_{L^2(\R^d)}\\
&\le C_T \norm{\<D\>^{-1/2}F}_{L^2([-T,T];L^2(\R^d))}
\end{align*}
since $F$ has a compact support with respect to $x$. The Christ-Kiselev lemma (see \cite{Christ_Kiselev,Smith_Sogge}) then implies
$$
\bigg|\bigg|\int^t_0 \wtilde{U}(t-s)F(s)ds\bigg|\bigg|_{L^p([-T,T];L^q(\R^d))} 
\le C_T \norm{\<D\>^{-1/2}F}_{L^2([-T,T];L^2(\R^d))}, 
$$
provided that $p >2$. We split $F$ as $$F= ([\chi,H]\chi_1+[\Delta,\chi_1]\chi)u+[\chi_1,[\chi,H]]u=:F_1+F_2.$$ Since $[\chi,H]$ is a first order differential operator with bounded coefficients, we see that $[\chi_1,[\chi,H]]$ is bounded on $L^2(\R^d)$, and $\norm{\<D\>^{-1/2}F_2}_{L^2([-T,T];L^2(\R^d))}$ is dominated by $C_T\norm{u_0}_{L^2(\R^d)}$
We now use \eqref{local smoothing effect} and obtain 
\begin{align*}
\norm{\<D\>^{-1/2}F_1}_{L^2([-T,T];L^2(\R^d))}
&\le C\norm{\chi_1 u}_{L^2([-T,T];H^{-1/2}(\R^d))}\\
&\le C \norm{ \<D\>^{1/2}\chi_1u}_{L^2([-T,T];L^2(\R^d))}\\
&\le C_T \norm{u_0}_{L^2}, 
\end{align*}
which completes the proof.
  \end{bew}


\appendix

\section{Proof of Propositon \ref{WKB_2_proposition_1}}
\label{appendix_A}
Assume Assumption A with $\mu=0$, $\nu\ge 0$. We here give the detail of the proof of Propositon \ref{WKB_2_proposition_1}. We first study the corresponding classical mechanics. Consider the Hamilton flow 
$$(X_h(t),\Xi_h(t))=(X_h(t,x,\xi),\Xi_h(t,x,\xi)),\quad h \in (0,1],$$ generated by the semi-classical total energy 
$$
p_h(x,\xi)=k(x,\xi)+h^2V(x),
$$
 \emph{i.e.}, $(X_h(t),\Xi_h(t))$ is the solution to the Hamilton equations
\begin{equation}
\left\{
\begin{aligned}
\nonumber
\dot{X}_{h,j}(t)&=\sum_k a^{jk}(X_h(t))\Xi_{h,k}(t),\\
\dot{\Xi}_{h,j}(t)&=-\frac12 \sum_{k,l}\frac{\partial a^{kl}}{\partial x_j}(X_h(t))\Xi_{h,k}(t) \Xi_{h,l}(t)-h^2\frac{\partial V}{\partial x_j}(X_h(t)),
\end{aligned}
\right.
\end{equation}
with the initial condition $(X_h(0),\Xi_h(0))=(x,\xi)$, where $\dot{f}=\partial_t f$. We first prepare an a priori bound of the flow.

\begin{lemma}						
\label{WKB_1_lemma_1}
For all $h \in (0,1]$, $|t| \lesssim h^{-1}$ and $(x,\xi) \in \R^{2d}$,  
\begin{align*}
|X_h(t)-x| \lesssim \left(|\xi|+h\<x\>^{1-\nu/2}\right)|t|,\quad |\Xi_h(t)| \lesssim |\xi|+h\<x\>^{1-\nu/2}.
\end{align*}
\end{lemma}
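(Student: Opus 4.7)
\textbf{Proof plan for Lemma \ref{WKB_1_lemma_1}.} The strategy is to combine energy conservation with a Gronwall/bootstrap argument on the momentum. First, because $p_h = k + h^2 V$ is conserved along the Hamilton flow, uniform ellipticity of $(a^{jk})$ and the growth bound $|V(x)| \le C\<x\>^{2-\nu}$ from Assumption~A yield
\[
|\Xi_h(t)|^2 \lesssim k(X_h(t),\Xi_h(t)) = k(x,\xi) + h^2V(x) - h^2V(X_h(t)) \lesssim |\xi|^2 + h^2\<x\>^{2-\nu} + h^2\<X_h(t)\>^{2-\nu}.
\]
On the other hand, the flow equation gives $|\dot X_h(s)| = |\partial_\xi k(X_h(s),\Xi_h(s))| \lesssim |\Xi_h(s)|$, so
\[
|X_h(s) - x| \lesssim \int_0^{|s|} |\Xi_h(\tau)|\,d\tau.
\]
These two inequalities form the coupled system that has to be closed.

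I would run a continuity argument: fix a large constant $K$ and let $T^*$ be the supremum of times $\tau \in [0, c_0 h^{-1}]$ (with $c_0>0$ small, to be chosen) for which $|\Xi_h(s)| \le K\bigl(|\xi| + h\<x\>^{1-\nu/2}\bigr)$ on $|s|\le\tau$. For $|s|\le T^*$, integrating the flow bound produces
\[
\<X_h(s)\> \le \<x\> + C K\bigl(|\xi| + h\<x\>^{1-\nu/2}\bigr)|s| \le (1+CKc_0)\<x\> + CKc_0\,|\xi|/h,
\]
using $h|s|\le c_0$ and $\<x\>^{1-\nu/2}\le\<x\>$ (valid for $\nu\ge0$). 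Since we may assume $\nu<2$, we can expand $\<X_h(s)\>^{2-\nu}\lesssim \<x\>^{2-\nu} + (c_0|\xi|/h)^{2-\nu}$, and then
\[
h^2\<X_h(s)\>^{2-\nu} \lesssim h^2\<x\>^{2-\nu} + c_0^{2-\nu} h^\nu |\xi|^{2-\nu}.
\]
Applying Young's inequality $h^\nu|\xi|^{2-\nu}\le h^2 + |\xi|^2$ (which is AM-GM with exponents $\nu/2$ and $(2-\nu)/2$) and reinserting into the energy bound from the first paragraph gives
\[
|\Xi_h(s)|^2 \le C_*(|\xi|^2 + h^2\<x\>^{2-\nu}) + Cc_0^{2-\nu}(h^2+|\xi|^2),
\]
where $C_*$ is independent of $K$ and $c_0$. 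Choosing $c_0$ sufficiently small absorbs the last term into the first, yielding $|\Xi_h(s)|\le C_{**}(|\xi|+h\<x\>^{1-\nu/2})$ with $C_{**}$ independent of $K$. Taking $K:=2C_{**}$ closes the bootstrap and forces $T^* = c_0 h^{-1}$. Once the momentum bound is established, the position bound $|X_h(t)-x|\lesssim(|\xi|+h\<x\>^{1-\nu/2})|t|$ follows immediately from the integral inequality in the first paragraph.

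The main obstacle is the feedback between $X_h$ and $\Xi_h$: the potential term contributes $h^2\<X_h(t)\>^{2-\nu}$ to the energy, and $\<X_h(t)\>$ depends on $|\Xi_h|$, so a naive estimate on $|t|\le h^{-1}$ yields a term like $h^2(M|t|)^{2-\nu}$ that cannot be directly absorbed when $\nu$ is small (in particular the $\nu=0$ case has to be checked separately by absorbing $c_0^2 M^2$ into the left-hand side). The clean Young-type rearrangement converting $h^\nu|\xi|^{2-\nu}$ into $h^2+|\xi|^2$ is what makes the bootstrap close uniformly in $h \in (0,1]$ and in the initial data $(x,\xi)\in\R^{2d}$, at the price of restricting $|t|$ to $c_0h^{-1}$ for a small but absolute constant $c_0$; this is exactly the meaning of $|t|\lesssim h^{-1}$ in the statement.
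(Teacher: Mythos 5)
Your proposal is correct and shares the paper's essential first step: conservation of $p_h$ along the flow plus uniform ellipticity give $|\Xi_h(t)|^2 \lesssim |\xi|^2 + h^2\<x\>^{2-\nu} + h^2\<X_h(t)\>^{2-\nu}$, coupled with $|\dot X_h|\lesssim|\Xi_h|$. You close this coupled system by a continuity/bootstrap argument: posit $|\Xi_h|\le K(|\xi|+h\<x\>^{1-\nu/2})$ on a maximal interval, feed it back to control $\<X_h(s)\>$, and use Young's inequality $h^\nu|\xi|^{2-\nu}\le h^2+|\xi|^2$ to absorb the mixed term, shrinking $c_0$ so the bootstrap self-improves. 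The paper instead takes square roots and observes that since $\nu\ge0$ the map $r\mapsto r^{1-\nu/2}$ is sublinear, so $\<X_h(t)\>^{1-\nu/2}\lesssim\<x\>^{1-\nu/2}+|X_h(t)-x|$; this linearizes the feedback and Gronwall applied to the resulting scalar differential inequality for $|X_h(t)-x|$ finishes the job, the exponential $e^{Cht}$ being $O(1)$ on $|t|\lesssim h^{-1}$. The paper's route is shorter and holds for any implicit constant in $|t|\lesssim h^{-1}$, whereas your bootstrap, as phrased, fixes a particular small $c_0$ (which suffices for the paper's applications); your version trades the slicker sublinearity trick for more explicit tracking at the level of the conserved quadratic energy. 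One bookkeeping remark: in the step $\<X_h(s)\>^{2-\nu}\lesssim\<x\>^{2-\nu}+(c_0|\xi|/h)^{2-\nu}$ the hidden constants do involve $K$; that is fine once $c_0K\lesssim 1$, but the order of choices should be made explicit (first determine the $K$-free constant $C_{**}$, set $K=2C_{**}$, then pick $c_0\le c/K$) rather than claiming $C_*$ is unconditionally independent of $K$ and $c_0$.
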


\begin{proof}
We consider the case $ t \ge 0$. The proof for the case $t<0$ is analogous. Since the Hamilton flow conserves the total energy, namely 
$$p_h(x,\xi)=p_h(X_h(t),\Xi_h(t))\quad\text{for all}\quad t \in \R,$$
 we have
\begin{align*}
|\Xi_h(t)| 
&\lesssim \sqrt{p_0(X_h(t),\Xi_h(t))}\\
&\lesssim \sqrt{p_h(x,\xi)-h^2V(X_h(t))}\\
& \lesssim |\xi|+h\<x\>^{1-\nu/2}+h\<X_h(t)\>^{1-\nu/2}. 
\end{align*}
Applying the above inequality to the Hamilton equation, we have
\begin{align*}
|\dot{X}^h(t)| 
\lesssim |\Xi_h(t)|\lesssim |\xi|+h\<x\>^{1-\nu/2}+h|X_h(t)-x|.
\end{align*}
Integrating with respect to $t$ and using Gronwoll's inequality, we obtain the assertion since $e^{th} \lesssim |t|$ for $|t| \lesssim h^{-1}$. 
  \end{proof}

Let $J \Subset (0,\infty)$ be an open interval. For sufficiently small $\delta>0$ and for all $0<R \le h^{-1}$, the above lemma implies
\begin{align}
\label{WKB_1_lemma_1_1}
|x|/2 \le |X_h(t,x,\xi)| \le 2|x|
\end{align}
uniformly with respect to $h \in (0,1]$, $|t| \le \delta R$ and $(x,\xi) \in \Omega(R,J)$. 
By using this inequality, we have the following:

\begin{lemma}					
\label{WKB_1_lemma_3}
Let $J,\delta$ be as above. Then, for $h \in (0,1]$, $0<R \le h^{-1}$, $|t| \le \delta R$ and $(x,\xi) \in \Omega(R,J)$, $X_h(t,x,\xi)$ and $\Xi_h(t,x,\xi)$ satisfy 
\begin{equation}
\left\{
\begin{aligned}
\label{WKB_1_lemma_3_1}
|X_h(t)-x| &\le C(1+\delta h\<x\>^{1-\nu})|t|,\\
|\Xi_h(t)-\xi| &\le C(\<x\>^{-1}+h^2\<x\>^{1-\nu})|t|, 
\end{aligned}
\right.
\end{equation}
and, for $|\alpha+\beta| = 1$, 
\begin{equation}
\left\{
\begin{aligned}
\label{WKB_1_lemma_3_2}
|\dderiv{x}{\xi}{\alpha}{\beta}(X_h(t)-x)| 
&\le C_{\alpha\beta}\left(\<x\>^{-|\alpha|}+h^{|\alpha|}\<x\>^{-|\alpha|\nu/2}\right)|t|,\\
|\dderiv{x}{\xi}{\alpha}{\beta}(\Xi_h(t)-\xi)
&\le C_{\alpha\beta}\left(\<x\>^{-1-|\alpha|}+h^{1+|\alpha|}\<x\>^{-(1+|\alpha|)\nu/2}\right)|t|, 
\end{aligned}
\right.
\end{equation}
and, for $|\alpha+\beta| \ge 2$,
\begin{equation}
\left\{
\begin{aligned}
\label{WKB_1_lemma_3_2_2}
|\dderiv{x}{\xi}{\alpha}{\beta}(X_h(t)-x)| 
&\le C_{\alpha\beta}\delta h^{|\alpha|}\<x\>^{-1}R|t|,\\
|\dderiv{x}{\xi}{\alpha}{\beta}(\Xi_h(t)-\xi)| 
&\le C_{\alpha\beta} h^{|\alpha|}\<x\>^{-1}|t|.
\end{aligned}
\right.
\end{equation}
Moreover $C,C_{\alpha\beta}>0$ may be taken uniformly with respect to $R$, $h$ and $t$.
\end{lemma}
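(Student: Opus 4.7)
The plan is to derive all three groups of estimates from the Hamilton equations
\[
\dot X_{h,j}=\sum_k a^{jk}(X_h)\Xi_{h,k},\qquad \dot\Xi_{h,j}=-\tfrac12\sum_{k,l}\partial_j a^{kl}(X_h)\Xi_{h,k}\Xi_{h,l}-h^2\partial_j V(X_h),
\]
by direct integration, systematically exploiting the a priori localization $|X_h(s)|\sim|x|$ from \eqref{WKB_1_lemma_1_1}, Assumption~A with $\mu=0$, Lemma~\ref{WKB_1_lemma_1}, and the double bound $|t|\le\delta R\le\delta h^{-1}$. For \eqref{WKB_1_lemma_3_1}, substituting $|\partial a^{kl}(X_h)|\lesssim\langle x\rangle^{-1}$, $|\partial V(X_h)|\lesssim\langle x\rangle^{1-\nu}$ and $|\Xi_h(s)|^2\lesssim1+h^2\langle x\rangle^{2-\nu}$ into the $\Xi_h$-equation gives $|\dot\Xi_h(s)|\lesssim\langle x\rangle^{-1}+h^2\langle x\rangle^{1-\nu}$, and integration over $[0,t]$ yields the $\Xi_h$-bound. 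Feeding this into $\dot X_h=a(X_h)\Xi_h$ and using $|\xi|$ bounded on $\Omega(R,J)$ together with $|s|\langle x\rangle^{-1}\lesssim\delta$ and $h^2\langle x\rangle^{1-\nu}|s|\lesssim\delta h\langle x\rangle^{1-\nu}$ then produces the $X_h$-bound.

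For the first-order bounds \eqref{WKB_1_lemma_3_2} I would differentiate the Hamilton equations once with respect to $z\in\{x_i,\xi_i\}$. The pair $(Y,Z)=(\partial_z X_h,\partial_z\Xi_h)$ solves the linear variational system
\[
\dot Y=\mathcal A(s)Y+\mathcal B(s)Z,\qquad \dot Z=\mathcal C(s)Y+\mathcal D(s)Z,
\]
with initial data $(e_i,0)$ if $z=x_i$ and $(0,e_i)$ if $z=\xi_i$, where $\|\mathcal A\|,\|\mathcal D\|\lesssim\langle x\rangle^{-1}+h\langle x\rangle^{-\nu/2}$, $\|\mathcal B\|\lesssim1$ and $\|\mathcal C\|\lesssim\langle x\rangle^{-2}+h^2\langle x\rangle^{-\nu}$ by the zeroth-order bounds and Assumption~A. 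The key algebraic fact is $\|\mathcal C\|\lesssim\|\mathcal A\|^2$ together with $|t|\|\mathcal A\|\lesssim\delta$ (from $|s|\langle x\rangle^{-1}\lesssim\delta$ and $h|t|\lesssim\delta$), so iterating the integral form of the system produces a geometrically convergent Picard series whose leading iterate from the initial data already supplies the desired bounds on $\|Y(t)-Y(0)\|$ and $\|Z(t)-Z(0)\|$, with all corrections smaller by a factor $\delta$; this precisely matches \eqref{WKB_1_lemma_3_2}.

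For \eqref{WKB_1_lemma_3_2_2} I would induct on $|\alpha+\beta|\ge 2$. The derivative $(\partial_x^\alpha\partial_\xi^\beta X_h,\partial_x^\alpha\partial_\xi^\beta\Xi_h)$ satisfies the same linear variational system, now with a forcing given by the Fa\`a di Bruno expansion of $\partial_x^\alpha\partial_\xi^\beta$ applied to the right-hand sides of the Hamilton equations, involving only strictly lower-order derivatives of $(X_h,\Xi_h)$ already controlled by the inductive hypothesis. The main obstacle is a careful accounting of the powers of $h$, $\langle x\rangle$, $R$ and $\delta$: one must verify that for $|\alpha+\beta|\ge 2$ the $X_h$-forcing has size $O(\delta h^{|\alpha|}\langle x\rangle^{-1}R)$ and the $\Xi_h$-forcing has size $O(h^{|\alpha|}\langle x\rangle^{-1})$, so that one further integration reproduces \eqref{WKB_1_lemma_3_2_2}. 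The appearance of a single $\langle x\rangle^{-1}$ rather than $\langle x\rangle^{-|\alpha|}$ is sharp because higher $\partial_x$-derivatives of $X_h$ itself need not decay in $x$; the induction nevertheless closes because the cutoff $|t|\le\delta R\le\delta h^{-1}$ converts each additional $\partial_x$ that falls on the $h^2V(X_h)$ term into a $\delta h$-gain.
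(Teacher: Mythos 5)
Your proposal is correct and follows essentially the same route as the paper: direct integration of the Hamilton equations using the a priori bounds of Lemma \ref{WKB_1_lemma_1} and \eqref{WKB_1_lemma_1_1} for the zeroth-order estimates, the linear variational system for $|\alpha+\beta|=1$, and an induction with lower-order forcing for $|\alpha+\beta|\ge 2$. The only cosmetic difference is that you close the first-order estimates by a Picard/Neumann series exploiting $\|\mathcal{C}\|\lesssim\|\mathcal{A}\|^2$ and $|t|\,\|\mathcal{A}\|\lesssim\delta$, whereas the paper runs a Gronwall argument on the weighted quantity $w_h(x)|\partial X_h|+|\partial \Xi_h|$ with $w_h(x)=\<x\>^{-1}+h\<x\>^{-\nu/2}$; these are equivalent.
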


\begin{proof}
We only prove the case when $t \ge 0$, the proof for the case $t \le0$ is similar. Applying Lemma \ref{WKB_1_lemma_1} and \eqref{WKB_1_lemma_1_1} to the Hamilton equation, we have
\begin{align*}
|\dot{\Xi}^h(t)| 
&\lesssim \<X_h(t)\>^{-1}|\Xi_h(t)|^2+h^2\<X_h(t)\>^{1-\nu}\\
&\lesssim \<x\>^{-1}(1+h^2\<x\>^{2-\nu})+h^2\<x\>^{1-\nu}\\
&\lesssim \<x\>^{-1}+h^2\<x\>^{1-\nu},\\
|\dot{X}^h(t)| &\lesssim |\Xi_h(t)| \lesssim 1+\delta h\<x\>^{1-\nu},
\end{align*}
 and \eqref{WKB_1_lemma_3_1} follows. 
 
 We next prove \eqref{WKB_1_lemma_3_2}. By differentiating the Hamilton equation with respect to $\partial_x^\alpha\partial_\xi^\beta$, $|\alpha+\beta|=1$, we have 
\begin{align}
\label{WKB_1_lemma_3_3}
\frac{d}{dt}
\left(\begin{matrix}
\dderiv{x}{\xi}{\alpha}{\beta} X_h\\
\dderiv{x}{\xi}{\alpha}{\beta} \Xi_h
\end{matrix}\right)
=
\left(\begin{matrix}
\partial_{x}\partial_\xi p_h(X_h,\Xi_h) & \partial_{\xi}^2p_h(X_h,\Xi_h)\\
-\partial_{x}^2p_h(X_h,\Xi_h) & -\partial_{\xi}\partial_{x}p_h(X_h,\Xi_h)
\end{matrix}\right)
\left(\begin{matrix}
\dderiv{x}{\xi}{\alpha}{\beta} X_h\\
\dderiv{x}{\xi}{\alpha}{\beta} \Xi_h
\end{matrix}\right).
\end{align} 
Define a weight function 
$
w_h(x)=\<x\>^{-1}+h\<x\>^{-\nu/2}.
$ 
A direct computation and \eqref{WKB_1_lemma_3_1} then imply
\begin{align*}
\abs{(\dderiv{x}{\xi}{\alpha}{\beta}p_h)(X_h(t),\Xi_h(t))}
&\le C_{\alpha\beta}w_h(x)^{|\alpha|},\quad
|\alpha+\beta| = 2,\\
\abs{(\dderiv{x}{\xi}{\alpha}{\beta}p_h)(X_h(t),\Xi_h(t))}
&\le C_{\alpha\beta}\<x\>^{2-|\alpha+\beta|}w_h(x)^{|\alpha|-1},\quad
|\alpha+\beta| \ge 3,
\end{align*}
for all $|t| \le \delta R$ and $(x,\xi) \in \Omega(R,J)$, and $\partial_\xi^{\beta} p_h\equiv 0$ on $\R^{2d}$ for $|\beta| \ge 3$. By integrating \eqref{WKB_1_lemma_3_3} with respect to $t$, we have 
\begin{align*}
&w_h(x)|\dderiv{x}{\xi}{\alpha}{\beta} (X_h(t)-x)|+|\dderiv{x}{\xi}{\alpha}{\beta} (\Xi_h(t)-\xi)|\\
&\lesssim \int_0^t \left(w_h(x)\left( w_h(x)|\dderiv{x}{\xi}{\alpha}{\beta} (X_h(t)-x)|+|\dderiv{x}{\xi}{\alpha}{\beta} (\Xi_h(t)-\xi)|\right)+w_h(x)^{1+|\alpha|} \right)d\tau
\end{align*}
Using Gronwoll's inequality, we have \eqref{WKB_1_lemma_3_2} since $|t| \le \delta R$. 
 
For $|\alpha+\beta| \ge 2$, we shall prove the estimate for $\partial_{\xi_1}^2X_h(t)$ only. Proofs for other cases are similar, and for higher derivatives follow from an induction on $|\alpha+\beta|$. By the Hamilton equation and \eqref{WKB_1_lemma_3_2},  we learn
  \begin{align*}
\partial_{\xi_1}^2X_h
=\partial_{x}\partial_\xi p_h(X_h,\Xi_h) \partial_{\xi_1}^2X_h
+\partial_{\xi}^2p_h(X_h,\Xi_h) \partial_{\xi_1}^2\Xi_h+Q(h,x,\xi)
\end{align*}
where $Q(h,x,\xi)$ satisfies
\begin{align*}
Q(h,x,\xi) 
&\le C\sum_{|\alpha+\beta|=3,|\beta|=1,2}
(\partial_x^\alpha\partial_\xi^\beta p)(X_h,\Xi_h)
(\partial_{\xi_1}X_h)^{|\alpha|}(\partial_{\xi_1}\Xi_h)^{|\beta|}\\
&\le C\<x\>^{-1}\sum_{|\alpha|=1,2,3}w_h(x)^{|\alpha|-1}|t|^{|\alpha|}\\
&\le C\delta\<x\>^{-1}R. 
\end{align*}
We similarly obtain
\begin{align*}
\partial_{\xi_1}^2\Xi_h
=-\partial_{x}^2p_h(X_h,\Xi_h) \partial_{\xi_1}^2X_h
-\partial_{\xi}\partial_{x}p_h(X_h,\Xi_h) \partial_{\xi_1}^2\Xi_h
+O(\<x\>^{-1}),
\end{align*}
and these estimates and Gronwoll's inequality imply
\begin{align*}
&(\delta R)^{-1}|\partial_{\xi_1}^2X_h(t)|+|\partial_{\xi_1}^2\Xi_h(t)|\\
&\lesssim \int_0^t w_h(x)\left((\delta R)^{-1}|\partial_{\xi_1}^2X_h(t)|+|\partial_{\xi_1}^2\Xi_h(t)|\right)+\<x\>^{-1}d\tau\\
&\lesssim \<x\>^{-1}|t|
\end{align*}
for $0 \le t \le \delta R$. We hence have the assertion.
   \end{proof}

\begin{remark}				
If $\nu =1$, then Lemma \ref{WKB_1_lemma_3} implies that for any $\alpha,\beta \in \Z_+^d$, there exists $C_{\alpha\beta}$ such that
\begin{align}
\label{WKB_remark_1}
|\dderiv{x}{\xi}{\alpha}{\beta}(X_h(t)-x)| 
&\le C_{\alpha\beta}\delta R^{1-|\alpha|},\quad
|\dderiv{x}{\xi}{\alpha}{\beta}(\Xi_h(t)-\xi)
\le C_{\alpha\beta}\delta R^{-|\alpha|}, 
\end{align}
uniformly with respect to $h \in (0,1]$, $0<R \le h^{-1}$, $|t| \le \delta R$ and $(x,\xi) \in \Omega(R,J)$. 
\end{remark}

\begin{lemma}							
\label{WKB_2_lemma_1}
Suppose that $\nu= 1$ and let $J_1 \Subset J_1' \Subset (0,\infty)$ be open intervals. Then there exists $\delta>0$ small enough such that, for any fixed $|t| \le \delta R$, the map
$$
g_h(t):(x,\xi) \mapsto (X_h(t,x,\xi),\xi)
$$
is a diffeomorphism from $\Omega(R/2,J_1')$ onto its range. Moreover, we have
\begin{align}
\label{WKB_2_lemma_1_1}
\Omega(R,J_1) \subset g^h(t,\Omega(R/2,J_1')),\quad |t| \le \delta R.
\end{align}
\end{lemma}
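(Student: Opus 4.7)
The plan is to combine the quantitative estimates from Remark \ref{WKB_remark_1} (which apply precisely because $\nu=1$) with an inverse function theorem argument for the diffeomorphism property, and a contraction mapping argument for the inclusion. Since the $\xi$ component of $g_h(t)$ is the identity, the entire analysis reduces to the map $x \mapsto X_h(t,x,\xi)$ for each fixed $\xi$ with $|\xi|^2 \in J_1'$.

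For the diffeomorphism claim, I would first observe that \eqref{WKB_remark_1} with $|\alpha|=1$, $|\beta|=0$ gives
$$\partial_x X_h(t,x,\xi) = \Id + O(\delta)$$
uniformly on $\Omega(R/2,J_1')$ for $|t| \le \delta R$. Hence for $\delta$ small enough, $\partial_x X_h(t)$ is invertible everywhere on $\Omega(R/2, J_1')$, so $g_h(t)$ is a local diffeomorphism by the inverse function theorem. For global injectivity on each $\xi$-slice, I would set $F(x) = X_h(t,x,\xi) - x$ and note that $\abs{F(x_1) - F(x_2)} \le C\delta \abs{x_1 - x_2}$ by integrating $\partial_x F$, so that
$$\abs{X_h(t,x_1,\xi) - X_h(t,x_2,\xi)} \ge (1-C\delta)\abs{x_1-x_2},$$
giving injectivity once $\delta < 1/C$. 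Combined with the local diffeomorphism property, this makes $g_h(t)$ a diffeomorphism onto its image.

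For the inclusion \eqref{WKB_2_lemma_1_1}, fix $(y,\xi) \in \Omega(R,J_1)$, so $\abs{y} > R/2$ and $|\xi|^2 \in J_1 \Subset J_1'$. I would solve $X_h(t,x,\xi) = y$ by a contraction argument: define $T(x) = x - F(x) + (y - x) = y - F(x)$, so that fixed points of $T$ correspond exactly to preimages of $y$. By the Lipschitz estimate above, $T$ is a $C\delta$-contraction on $\R^d$. By Remark \ref{WKB_remark_1} with $\alpha = \beta = 0$, any fixed point $x^*$ obeys $\abs{x^* - y} = \abs{F(x^*)} \le C\delta R$, hence $\abs{x^*} \ge \abs{y} - C\delta R > R/2 - C\delta R > R/4$ for $\delta$ sufficiently small, and so $(x^*,\xi) \in \Omega(R/2, J_1')$ as required.

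The only delicate point is making sure the contraction actually produces a point in $\Omega(R/2, J_1')$; this is where the strict inclusions $J_1 \Subset J_1'$ and the factor $R/2$ versus $R/4$ are essential, since they absorb the $O(\delta R)$ deviation of $X_h(t,x,\xi)$ from $x$. Provided the same $\delta_0$ is chosen small enough (independent of $R$ and $h$) to satisfy simultaneously the inverse function theorem threshold, the Lipschitz injectivity threshold, and the condition $C\delta < 1/4$, all the claims follow at once.
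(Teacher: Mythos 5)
Your approach differs from the paper's: the paper introduces a cutoff $\chi\in S(1,g)$ supported in a slightly larger region, defines the modified map $g_h^\chi(t,x,\xi)=((1-\chi)x+\chi X_h(t),\xi)$ on all of $\R^{2d}$, rescales $x\mapsto Rz$, and invokes the Hadamard global inverse mapping theorem; the inclusion \eqref{WKB_2_lemma_1_1} is then obtained by showing $g_h^\chi(t)$ sends $\Omega(R/2,J_1')^c$ into $\Omega(R,J_1)^c$ and using bijectivity of $g_h^\chi$. You instead argue directly on $\Omega(R/2,J_1')$ via the inverse function theorem, a Lipschitz bound for global injectivity, and a contraction mapping for surjectivity onto $\Omega(R,J_1)$.

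There is a gap in the injectivity step. The Lipschitz bound $|F(x_1)-F(x_2)|\le C\delta|x_1-x_2|$ is obtained by ``integrating $\partial_x F$,'' i.e.\ along the straight segment from $x_1$ to $x_2$. But the region where \eqref{WKB_remark_1} (applied with $R\mapsto R/2$, $J\mapsto J_1'$) gives $\partial_x F=O(\delta)$ is $\{|x|>R/4,\ |\xi|^2\in J_1'\}$, which is \emph{not} convex in $x$: the segment from $x_1$ to $-x_1$ passes through $\{|x|\le R/4\}$, where you have no control on $\partial_x X_h$, so the argument as written does not yield global injectivity on each $\xi$-slice. The same issue makes the claim that ``$T$ is a $C\delta$-contraction on $\R^d$'' imprecise, since the contraction constant is only available where the estimates hold. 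The contraction step itself is easily repaired: run it on the closed ball $\overline{B}(y,C\delta R)$, which is convex, contained in $\{|x|>R/4\}$ (because $|y|>R/2$ and $C\delta<1/4$), and invariant under $T$; this produces a fixed point in $\Omega(R/2,J_1')$ as required. For injectivity, however, you need either to replace straight segments by paths remaining in $\{|x|>R/4\}$ (for $d\ge2$ such paths have length comparable to $|x_1-x_2|$, at the cost of a dimensional constant absorbed by shrinking $\delta$; for $d=1$ the two components must be treated separately, noting that $|F|\le C\delta R$ keeps their images disjoint), or to adopt the paper's cutoff/global-IFT device, which was designed precisely to sidestep this nonconvexity.
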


\begin{proof}
We choose $J_1''$ so that $J_1'\Subset J_1'' \Subset (0,\infty)$. Choosing $\chi \in S(1,g)$ such that 
$$
0 \le \chi \le 1,\ 
\supp \chi \subset \Omega(R/3,J_1''),\ 
\chi \equiv 1\ \text{on}\ \Omega(R/2,J_1'),
$$
we define $X_h^\chi(t,x,\xi):=(1-\chi(x,\xi))x+\chi(x,\xi)X_h(t,x,\xi)$ and set $$g_h^\chi(t,x,\xi)=(X_h^\chi(t,x,\xi),\xi).$$ 
We also define $(z,\xi) \mapsto\tilde{g}_h^\chi(t,z,\xi)$ by 
$$
\tilde{g}_h^\chi(t,z,\xi)
=(\tilde{X}^\chi_h(t,z,\xi),\xi)
:=(X^\chi_h(t,Rz,\xi)/R,\xi) .
$$ 
By \eqref{WKB_remark_1}, there exists $\delta>0$ so small that, for $|t| \le \delta R,\ (z,\xi) \in \R^{2d}$,
$$
|\dderiv{z}{\xi}{\alpha}{\beta}(\tilde{X}^\chi_h(t,z,\xi)-z)| 
\lesssim \delta R^{-|\alpha|},\quad
|\dderiv{z}{\xi}{\alpha}{\beta}(J(\tilde{g}_h^\chi)(t,z,\xi)-\Id)| \le C_{\alpha\beta} \delta<1/2,
$$
where $J(\tilde{g}_h^\chi)$ is the Jacobi matrix with respect to $(z,\xi)$. 
The Hadamard global inverse mapping theorem then shows that $\tilde{g}_h^\chi(t)$ is a diffeomorphism from $\R^{2d}$ onto itself if $|t| \le \delta R$. By definition, $g_h(t)$ is a
 diffeomorphism from $\Omega(R/2,J_1')$ onto its range. 

We next prove \eqref{WKB_2_lemma_1_1}. 
Since $g_h(t)=g^\chi_h(t)$ and $g^\chi_h(t)$ is bijective on $\Omega(R/2,J_1')$, it suffices to check that
$$
\Omega(R,J_1)^c \supset g^\chi_h(t,\Omega(R/2,J_1')^c).
$$
Suppose that $(x,\xi) \in \Omega(R/2,J_1')^c$. If $(x,\xi)  \in \Omega(R/3,J_1'')^c$, then 
$$
g^\chi_h(t,x,\xi)=(x,\xi) \in \Omega(R/3,J_1'')^c \subset \Omega(R,J_1)^c. 
$$
Suppose that $(x,\xi) \in \Omega(R/3,J_1'') \setminus \Omega(R/2,J_1')$. By \eqref{WKB_1_lemma_3_1} and the support property of $\chi$, we have
\begin{align*}
|X_h^\chi(t)|
\le |x|+|\chi(X_h(t)-x)| \le  R/2+C\delta R
\end{align*}
for some $C>0$ independent of $R$ and $h$. Choosing $\delta$ satisfying $1/2+C\delta<1$, we obtain
$g_h^\chi(t,x,\xi) \in \Omega(R,J_1)^c$.
  \end{proof}

Let $\Omega(R,J_1) \ni (x,\xi)\mapsto(Y_h(t,x,\xi),\xi)$ be the inverse of $\Omega(R/2,J_1') \in (x,\xi) \mapsto (X_h(t,x,\xi),\xi)$. 
\begin{lemma}							
\label{WKB_2_lemma_2}
Let $\delta,J_1$ as above and $\nu = 1$. Then, for all $h \in (0,1]$, $0<R \le h^{-1}$, $0<|t| \le \delta R$ and $(x,\xi) \in \Omega(R,J_1)$, we have
\begin{align*}
|\dderiv{x}{\xi}{\alpha}{\beta}(Y_h(t,x,\xi)-x)| 
&\le C_{\alpha\beta}\delta R^{1-|\alpha|},\\
|\dderiv{x}{\xi}{\alpha}{\beta}(\Xi_h(t,Y_h(t,x,\xi))-\xi)
&\le C_{\alpha\beta}\delta R^{-|\alpha|}.
\end{align*}
\end{lemma}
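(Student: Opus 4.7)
The plan is to reduce the estimates to uniform ($R$-independent) bounds via the rescaling already used in the proof of Lemma~\ref{WKB_2_lemma_1}, and then to extract the derivative bounds for the inverse and for the composition by Fa\`a di Bruno arguments with $\delta$ as a small parameter. Set $z=x/R$ and introduce the rescaled maps
\[
\tilde X^\chi_h(t,z,\xi):=X^\chi_h(t,Rz,\xi)/R,\qquad \tilde\Xi^\chi_h(t,z,\xi):=\Xi^\chi_h(t,Rz,\xi),
\]
where $\Xi^\chi_h(t,x,\xi):=(1-\chi(x,\xi))\xi+\chi(x,\xi)\Xi_h(t,x,\xi)$ is a smooth extension of $\Xi_h$ to $\R^{2d}$. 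From the identities $\partial_z^\alpha\partial_\xi^\beta(\tilde X^\chi_h-z)=R^{|\alpha|-1}\partial_x^\alpha\partial_\xi^\beta(X^\chi_h-x)$ and $\partial_z^\alpha\partial_\xi^\beta(\tilde\Xi^\chi_h-\xi)=R^{|\alpha|}\partial_x^\alpha\partial_\xi^\beta(\Xi^\chi_h-\xi)$ combined with \eqref{WKB_remark_1} (noting that $\chi\in S(1,g)$ satisfies $|\partial^\gamma\chi|\lesssim R^{-|\gamma|}$ on its support, which is consistent with the scaling), one obtains the uniform bounds
\[
|\partial_z^\alpha\partial_\xi^\beta(\tilde X^\chi_h-z)|+|\partial_z^\alpha\partial_\xi^\beta(\tilde\Xi^\chi_h-\xi)|\le C_{\alpha\beta}\delta,\qquad h\in(0,1],\ |t|\le\delta R,\ (z,\xi)\in\R^{2d}.
\]

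The next step is to control the global inverse $\tilde Y^\chi$ defined by $\tilde X^\chi_h(t,\tilde Y^\chi(t,z,\xi),\xi)=z$, whose existence on $\R^{2d}$ is already provided by the Hadamard argument used in the proof of Lemma~\ref{WKB_2_lemma_1}. Differentiating the defining identity once and inverting $\partial_z\tilde X^\chi_h=\Id+O(\delta)$ by a Neumann series gives $|\partial_z\tilde Y^\chi-\Id|+|\partial_\xi\tilde Y^\chi|\le C\delta$. For higher orders I would induct on $|\alpha+\beta|$: in the Fa\`a di Bruno expansion of $\partial_z^\alpha\partial_\xi^\beta\bigl[\tilde X^\chi_h(t,\tilde Y^\chi,\xi)\bigr]$, the term corresponding to the trivial partition equals $(\partial_z\tilde X^\chi_h)\cdot(\partial_z^\alpha\partial_\xi^\beta\tilde Y^\chi)$ and is isolated on the left-hand side, while every other term carries an outer derivative $\partial^\gamma\tilde X^\chi_h$ of order $|\gamma|\ge 2$ (hence of size $O(\delta)$) multiplied by lower-order $\tilde Y^\chi$-derivatives that are $O(1)$ by induction. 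Solving for the leading term produces
\[
|\partial_z^\alpha\partial_\xi^\beta(\tilde Y^\chi-z)|\le C_{\alpha\beta}\delta\qquad\text{for all }\alpha,\beta.
\]

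Because $\chi\equiv 1$ on $\Omega(R/2,J_1')$, which contains $(g^\chi_h(t))^{-1}(\Omega(R,J_1))$ by Lemma~\ref{WKB_2_lemma_1}, one has $Y_h(t,x,\xi)=R\tilde Y^\chi(t,x/R,\xi)$ and $\Xi_h(t,Y_h(t,x,\xi),\xi)=\tilde\Xi^\chi_h(t,\tilde Y^\chi(t,x/R,\xi),\xi)$ for every $(x,\xi)\in\Omega(R,J_1)$. Unscaling via $\partial_x^\alpha=R^{-|\alpha|}\partial_z^\alpha$ converts the bound on $\tilde Y^\chi-z$ directly into the first assertion,
\[
|\partial_x^\alpha\partial_\xi^\beta(Y_h-x)|=R^{1-|\alpha|}|\partial_z^\alpha\partial_\xi^\beta(\tilde Y^\chi-z)|\le C_{\alpha\beta}\delta R^{1-|\alpha|}.
\]
For the second bound, introduce $\tilde H(y,\xi):=\tilde\Xi^\chi_h(y,\xi)-\xi$, whose every derivative (including the function itself) is $O(\delta)$, and write $\tilde F(t,z,\xi):=\tilde\Xi^\chi_h(t,\tilde Y^\chi(t,z,\xi),\xi)-\xi=\tilde H(\tilde Y^\chi(t,z,\xi),\xi)$. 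Fa\`a di Bruno then yields $|\partial_z^\alpha\partial_\xi^\beta\tilde F|\le C_{\alpha\beta}\delta$, because every term in the expansion contains an outer derivative of $\tilde H$ of size $O(\delta)$ times factors of $\tilde Y^\chi$-derivatives which are $O(1)$. Unscaling gives $|\partial_x^\alpha\partial_\xi^\beta(\Xi_h(t,Y_h,\xi)-\xi)|\le C_{\alpha\beta}\delta R^{-|\alpha|}$, as asserted.

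The main obstacle is the Fa\`a di Bruno bookkeeping in the two inductions: one has to verify that in every non-leading contribution the small factor $\delta$ genuinely survives, which in the rescaled variables rests on the twin observations that $\partial^\gamma\tilde X^\chi_h=O(\delta)$ for $|\gamma|\ge 2$ and that every derivative of $\tilde H$ is $O(\delta)$. Once the rescaling has absorbed the growth in $R$, the remainder is a routine smooth-inverse and chain-rule calculus with a uniform small parameter.
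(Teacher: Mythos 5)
Your argument is correct and follows essentially the same strategy as the paper: deduce the bounds on $Y_h$ from the flow estimates \eqref{WKB_remark_1} by inverting the defining relation $x = X_h(t,Y_h(t,x,\xi),\xi)$. The systematic rescaling $z = x/R$ — which the paper invokes only in the Hadamard argument of Lemma \ref{WKB_2_lemma_1} — together with the Fa\`a di Bruno bookkeeping makes the higher-derivative induction explicit and clean, where the paper simply remarks that it is "obtained by an induction on $|\alpha+\beta|$" and omits the details.
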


\begin{proof}
We prove the inequalities for $Y_h$ only. Proofs for $\Xi_h(t,Y_h(t,x,\xi),\xi)$ are similar. 
Since $(Y_h(t,x,\xi),\xi) \in \Omega(R/2,J_1')$,  
\begin{align*}
|Y_h(t,x,\xi)-x|
&= |X_h(0,Y_h(t,x,\xi),\xi)-X_h(t,Y_h(t,x,\xi),\xi)|\\
&\le \sup_{(x,\xi)\in\Omega(R/2,J_1')}|X_h(t,x,\xi)-x| \\
&\lesssim \delta R. 
\end{align*}
Next, let $\alpha,\beta \in \Z_+^d$ with $|\alpha+\beta|=1$ and apply $\dderiv{x}{\xi}{\alpha}{\beta}$ to the equality 
$$x=X_h(t,Y_h(t,x,\xi),\xi).$$ 
We then have the following equality
\begin{align}
\label{proof_phase_function2_2}
A(t,Z_h(t))
\dderiv{x}{\xi}{\alpha}{\beta}(Y_h(t,x,\xi)-x)
=\dderiv{y}{\eta}{\alpha}{\beta}(y-X_h(t,y,\eta))
|_{(y,\eta)=Z_h(t)},\
\end{align}
where $Z_h(t,x,\xi)=(Y_h(t,x,\xi),\xi)$ and $A(t,Z)=(\partial_x X_h)(t,Z)$. By \eqref{WKB_1_lemma_3_1} and a similar argument as that in the proof of Lemma \ref{WKB_2_lemma_1}, we learn that $A(Z^h(t))$ is invertible, and that $A(Z^h(t))$ and $A(Z^h(t))^{-1}$ are uniformly bounded with respect to $h \in (,1]$, $|t| \le \delta R$ and $(x,\xi) \in \Omega(R,J_1)$. Therefore, 
\begin{align*}
\abs{\dderiv{x}{\xi}{\alpha}{\beta}(Y_h(t,x,\xi)-x)} 
&\le \sup_{(x,\xi)\in\Omega(R/2,J_1')}\abs{\dderiv{y}{\eta}{\alpha}{\beta}(y-X_h(t,y,\eta))}\\
&\le C_{\alpha\beta}\delta R^{1-|\alpha|}. 
\end{align*}
The proof for higher derivatives is obtained by an induction on $|\alpha+\beta|$, and we omit the details. 
  \end{proof}

\begin{bew}{of Proposition \ref{WKB_2_proposition_1}}
We consider the case when $t \ge0$, and the proof for $t \le0$ is similar. Choosing $J \Subset J_1\Subset (0,\infty)$, we define the action integral $\wtilde{\Psi}_h(t,x,\xi)$ on $[0,\delta R] \times \Omega(R/2,J_1)$ by
$$ 
\wtilde{\Psi}_h(t,x,\xi):=x\cdot\xi+\int_0^tL_h(X_h(s,Y_h(t,x,\xi),\xi),\Xi_h(s,Y_h(t,x,\xi),\xi))ds, 
$$
where $L_h(x,\xi)=\xi\cdot\partial_\xi p_h(x,\xi)-p_h(x,\xi)$ is the Lagrangian associated to $p_h$ and $Y_h$ is defined by the above argument with $R>0$ replaced by $R/2$. The smoothness property of $\wtilde{\Psi}_h$ follows from corresponding properties of $X_h$, $\Xi_h$ and $Y_h$. By the standard Hamilton-Jacobi theory, $\wtilde{\Psi}_h(t,x,\xi)$ solves the Hamilton-Jacobi equation \eqref{WKB_2_proposition_1_1} on $\Omega(R/2,J_1)$ and satisfies
$$
\partial_x\wtilde{\Psi}_h(t,x,\xi)=\Xi_h(t,Y_h(t,x,\xi),\xi),\quad \partial_\xi \wtilde{\Psi}_h(t,x,\xi)=Y_h(t,x,\xi).
$$
In particular, we obtain the following energy conservation law:
$$p_h(x,\partial_x\wtilde{\Psi}_h(t,x,\xi))=p_h(Y_h(t,x,\xi),\xi).$$ 
This energy conservation and Lemma \ref{WKB_2_lemma_2} imply
\begin{align*}
&|p_h(\partial_x\wtilde{\Psi}_h(t,x,\xi)-p_h(x,\xi)|\\
&\le
|Y_h(t,x,\xi)-x)|\int_0^1|\partial_x p_h(\lambda x+(1-\lambda)Y_h(t,x,\xi),\xi)|d\lambda\\
&\le C\delta R(\<x\>^{-1}+h^2) \\
&\le C\delta.
\end{align*}
By using Lemma \ref{WKB_2_lemma_2}, we also obtain 
\begin{align*}
|\dderiv{x}{\xi}{\alpha}{\beta}(p_h(x,\partial_x\wtilde{\Psi}_h(t,x,\xi))-p_h(x,\xi))|
&\le C_{\alpha\beta}
\delta R^{|\alpha|},\quad \alpha,\beta \in \Z^d_+.
\end{align*}
Therefore, 
$$
\abs{\dderiv{x}{\xi}{\alpha}{\beta}\left(\wtilde{\Psi}_h(t,x,\xi)-x\cdot\xi+tp_h(x,\xi)\right)}
\le C_{\alpha\beta} \delta R^{|\alpha|}|t|. 
$$
Choose $\chi \in S(1,g)$ so that $$
0 \le \chi\le 1,\ 
\chi \equiv 1\ \text{on}\ \Omega(R,J)\ \text{and}\ 
\supp \chi \subset \Omega(R/2,J_1),$$
 and define $$\Psi_h(t,x,\xi):=x\cdot\xi-tp_h(x,\xi)+\chi(x,\xi)(\wtilde{\Psi}_h(t,x,\xi)-x\cdot\xi+tp_h(x,\xi)).$$ Clearly, $\Psi_h(t,x,\xi)$ satisfies the statement of Proposition \ref{WKB_2_proposition_1}. 
  \end{bew}


\end{document}